\documentclass[a4paper,11pt]{article}
\usepackage{graphicx}
\usepackage{amsmath,amsthm,amssymb,enumerate}
\usepackage{euscript,mathrsfs}
\usepackage{dsfont}
\usepackage{xcolor}
\usepackage{empheq}
 \usepackage[left=2cm,right=2cm,top=3.5cm,bottom=3.5cm]{geometry}
\usepackage[linkcolor=blue,colorlinks=true]{hyperref}

\usepackage{tcolorbox}

\usepackage{stmaryrd}
\allowdisplaybreaks

\usepackage{color}
\catcode`\@=11 \@addtoreset{equation}{section}

\catcode`\@=12

\usepackage[labelformat=simple]{subcaption}

\usepackage{enumitem}
\usepackage{url}

\newtheorem{Theorem}{Theorem}[section]

\newtheorem{Lemma}[Theorem]{Lemma}

\theoremstyle{definition}
\newtheorem{Definition}[Theorem]{Definition}

\newtheorem{Remark}[Theorem]{Remark}

\newcommand{\bTheorem}[1]{
\begin{Theorem} \label{T#1} }
\newcommand{\eT}{\end{Theorem}}

\newcommand{\cch}{\cc_h}

 \newcommand{\cc}{\mathfrak{c}}
 \newcommand{\CC}{\mathfrak{C}}
\newcommand{\vrh}{\vr_h}
\newcommand{\vuh}{\vu_h}
\newcommand{\auh}{\widehat{\vuh}}
\newcommand{\auhk}{\widehat{\vuh^k}}
\newcommand{\Piw}{\Pi^W_h}

\newcommand{\Uh}{U_h}

\newcommand{\Del}{\Delta_x}

\newcommand{\bu}{\mathbf u}

\newcommand{\bfe}{\mathbf{e}}

\newcommand{\bfphi}{\boldsymbol{\phi}}

\newcommand{\Piq}{\Pi_h^Q}
\newcommand{\Piv}{\Pi_h^V}
\newcommand{\Pix}{\Pi_h^X}

\newcommand{\Fc}{ f  }

\newcommand{\up}{{\rm up}}
\newcommand{\Up}{{\rm Up}}
\newcommand{\bUp}{\textbf{Up}}
\newcommand{\Fup}{{\rm F}_\eps^{\up}}
\newcommand{\bFup}{\textbf{F}_\eps^{\up}}

\newcommand{\ds}{\,{\rm d}S_x}

\newcommand{\bFormula}[1]{
\begin{equation} \label{#1}}
\newcommand{\eF}{\end{equation}}

\newcommand{\grid}{\mathcal{T}}

\newcommand{\TS}{\Delta t}

\newcommand{\Divh}{{\rm div}_h}
\newcommand{\Gradh}{\nabla_h}

\newcommand{\Laph}{\Delta_h}
\newcommand{\Lap}{\Delta_x}

\newcommand{\co}[2]{{\rm co}\{ #1 , #2 \}}
\newcommand{\Ov}[1]{\overline{#1}}
\newcommand{\avs}[1]{ \left\{\hspace{-0.2em}\left\{#1 \right\} \hspace{-0.2em}\right\}    }

\newcommand{\DC}{C^\infty_c}

\newcommand{\aleq}{\stackrel{<}{\sim}}
\newcommand{\ageq}{\stackrel{>}{\sim}}

\newcommand{\vr}{\varrho}

\newcommand{\vv}{\vc{v}}
\newcommand{\vu}{\vc{u}}
\newcommand{\vm}{\vc{m}}
\newcommand{\vs}{\vc{v}_\sigma}
\newcommand{\us}{\vc{u}_\sigma}

\newcommand{\EE}{\mathfrak{E}}
\newcommand{\RR}{\mathfrak{R}}

\newcommand{\vn}{\vc{n}}

\newcommand{\vc}[1]{{\bf #1}}

\newcommand{\Div}{{\rm div}_x}

\newcommand{\Grad}{\nabla_x}

\newcommand{\dx}{\,{\rm d} {x}}

\newcommand{\dt}{{\rm d} t }

\newcommand{\jump}[1]{\left\llbracket#1\right\rrbracket}
\newcommand{\abs}[1]{\left\lvert#1 \right\rvert}

\newcommand{\norm}[1]{\lVert#1\rVert}
\newcommand{\normH}[1]{  \lvert \lVert#1\rVert \rvert }
\newcommand{\normB}[1]{\lvert\lVert#1\rVert \rvert _B}

\newcommand{\vU}{\vc{U}}

\newcommand{\dxdt}{\dx \dt}
\newcommand{\intO}[1]{\int_{\Omega} #1 \dx}
\newcommand{\intOB}[1]{\int_{\Omega} \left( #1  \right)\dx}

\newcommand{\intSh}[1] {\int_{\sigma} #1 \ds }

\newcommand{\intfaces}{ \sum_{ \sigma \in \faces } \intSh}

\newcommand{\intfacesB}[1]{ \sum_{ \sigma \in \faces } \intSh{\left( #1 \right)}}

\newcommand{\intK}[1]{\int_{K} #1 \ \dx}
\newcommand{\intTN}[1]{\int_{\Omega} #1 \ \dx}
\newcommand{\intTO}[1]{\int_0^T \int_{\Omega} #1 \ \dxdt}

\newcommand{\D}{{\rm d}}

\newcommand{\R}{\mathbb{R}}

\newcommand{\calF}{\mathcal{F}}
\newcommand{\calP}{\mathcal{P}}

\definecolor{Cgrey}{rgb}{0.85,0.85,0.85}
\definecolor{Cblue}{rgb}{0.50,0.85,0.85}
\definecolor{Cred}{rgb}{1,0,0}
\definecolor{fancy}{rgb}{0.10,0.85,0.10}
\definecolor{forestgreen}{rgb}{0.13, 0.55, 0.13}


\date{}
\newcommand{\pd}{\partial}
\newcommand{\Hc}{P}
\newcommand{\eps}{\varepsilon}
\newcommand{\faces}{\mathcal{E}}

\newcommand{\facesK}{\faces(K)}

\newcommand{\facesint}{\faces}

\newcommand{\Qh}{ Q_h}
\newcommand{\vQh}{ \mathfrak{Q}_h}

\newcommand{\bVh}{ \mathbf{V}_h}

\newcommand{\Xh}{ X_h}
\newcommand{\Wh}{ W_h}


\date{}


\begin{document}


\title{Numerical analysis of a model of two phase compressible fluid flow}

\author{Eduard Feireisl$^{1}$\and M\u ad\u alina Petcu$^{2,3,4}$ \and Bangwei She$^{1,5}$  }


\maketitle

\medskip

\centerline{${}^1$Institute of Mathematics of the Academy of Sciences of the Czech Republic}
\centerline{\v Zitn\' a 25, CZ-115 67 Praha 1, Czech Republic}

\bigskip

\centerline{${}^2$Laboratoire de Math\' ematiques et Applications, UMR CNRS 7348 - SP2MI}

\centerline{Universit\' e de Poitiers, Boulevard Marie et Pierre Curie - T\' el\' eport 2}

\centerline{86962 Chasseneuil, Futuroscope Cedex,
France}

\centerline{${}^3$The Institute of Mathematics of the Romanian Academy, Bucharest, Romania}

\centerline{and}

\centerline{${}^4$The Institute of Statistics and Applied Mathematics of the Romanian Academy, Bucharest, Romania}

\bigskip

\centerline{${}^5$Department of Mathematical Analysis, Charles University}
\centerline{Sokolovsk\' a 83, CZ-186 75 Praha 8, Czech Republic}

\begin{abstract}

We consider a model of a binary mixture of two immiscible compressible fluids. We  propose a numerical scheme and discuss its basic properties: Stability, consistency, convergence. The convergence is established via the method of generalized weak solutions combined with the weak--strong uniqueness principle.

\end{abstract}

{\bf Keywords:} Barotropic Navier--Stokes system, Allen--Cahn equation, dissipative weak solution, weak--strong uniqueness


\section{Introduction}
\label{I}

We consider a binary mixture of two immiscible compressible fluids. There are several ways how to model such a system. Here, we consider the model introduced in \cite{FePePr} based on the phase field approach:

\begin{equation} \label{i1}
\begin{split}
\partial_t \vr + \Div (\vr \vu) &= 0,\\
\partial_t (\vr \vu) + \Div (\vr \vu \otimes \vu) + \Grad p(\vr) &= 
\Div \mathbb{S}({\Grad} \vu) - \Div \left( \Grad \cc \otimes \Grad \cc - \frac{1}{2} |\Grad \cc|^2 \mathbb{I} \right)  + \Grad F(\cc)    ,\\ 
\partial_t \cc + \vc{u} \cdot \Grad \cc &=  \mu,              
\end{split} 
\end{equation}
where 
\begin{equation} \label{i2}
\begin{split}
\mu&= \Del \cc - F'(\cc),\\ 
\mathbb{S}( \Grad \vu) &= \nu \left( \Grad \vu + \Grad^t \vu - \frac{2}{d} \Div \vu \mathbb{I} \right) + 
\lambda \Div \vu \mathbb{I},\ \nu > 0, \ \lambda \geq 0.
\end{split} 
\end{equation}
The system \eqref{i1}, \eqref{i2} is a variant of the general phase field approach, where the density of the mixture is 
represented by a single scalar function $\vr$, the joint velocity is $\vu$, and the concentration difference of the two phases is the order 
parameter $\cc$, the evolution of which is governed by the Allen--Cahn equation.

The energy of the system is 
\[
E(\vr, \vu, \cc) = \frac{1}{2} \vr |\vu|^2 + \frac{1}{2} |\Grad \cc |^2 + P(\vr) + F(\cc), 
\]
where $P$ is the pressure potential  and $F$ is the Ginzburg-Landau potential. The pressure potential 
is related to the pressure $p$ as
\[
P'(\vr) \vr - P(\vr) = p(\vr).
\] 
Moreover, we suppose 
\begin{equation} \label{H3}
\begin{split}
&p \in C[0, \infty) \cap C^\infty(0,\infty),\\ 
&p'(\vr) > 0 \ \mbox{for}\ \vr > 0,\ \liminf_{\vr \to \infty} p'(\vr) > 0,\ 
p(\vr) \leq c \left(1 + P (\vr) \right) \ \mbox{for all}\ \vr \geq 0. 
\end{split}
\end{equation}  
The Ginzburg--Landau potential takes the form 
\begin{equation}\label{F}
 F(\cc)= 
\left\{\begin{array}{ll}
&  (\cc+1)^2, \quad{\text {on}} \quad \cc \leq-1,\\
&  \frac14  (\cc^2-1)^2, \quad{\text{ on}} \quad -1 \leq \cc \leq 1,\\
&  (\cc-1)^2, \quad{\text {on}} \quad \cc >1.
\end{array}\right.
\end{equation}
Note that $F$ coincides with the more standard double well potential $F(\cc)=\frac14 (\cc^2-1)^2$ 
in the physically relevant area $\cc \in [-1,1]$. All results of this paper remain valid 
for $F$ of the form $F(\cc)=\lambda \cc^2+ W(\cc)$, $\lambda > 0$, with $W\in \mathcal{C}^2 \cap W^{2, \infty}(\R)$ such that $W$ and $W'$ are (globally) Lipschitz functions.

We consider either the simplified periodic boundary conditions, where the physical domain can be identified with the flat torus 
\begin{subequations}\label{ibc}
\begin{equation} \label{i3}
\Omega = \mathcal{T}^{d} = \left( [-1,1]|_{\{ -1, 1 \}} \right)^{d},\ {d}=2,3, 
\end{equation}
or the Dirichlet boundary conditions 
\begin{equation} \label{i4}
\vu|_{\partial \Omega} = 0,\ \cc|_{\partial \Omega} = 0,\  
\Omega \subset \R^{d} \ \mbox{a bounded domain.}
\end{equation}
\end{subequations}

In comparison with more complex models proposed by Blesgen \cite{Blesgen} or Anderson et al. \cite{AnFaWh}, the present model 
is much simpler to facilitate numerical analysis. To the best of our knowledge, this is the first attempt in the context of mixtures of \emph{compressible} fluids.

If the boundary conditions \eqref{ibc} are imposed, the total energy is a Lyapunov function, 
\begin{equation} \label{i5}
\frac{{\rm d}}{{\rm d}t} \intO{ \left[ \frac{1}{2} \vr |\vu|^2 + \frac{1}{2} |\Grad \cc |^2 + P(\vr) + F(\cc) \right] } 
+ \intO{  \mathbb{S}(\Grad \vu) : \Grad \vu } + \intO{ |\Del \cc - F'(\cc)|^2 } = 0.
\end{equation}

Our goal in the present paper is 
\begin{itemize}

\item to propose a numerical scheme to solve \eqref{i1}, \eqref{i2}, with the boundary conditions \eqref{ibc};
\item to show stability estimates and consistency of the scheme; 
\item to show convergence of numerical approximations to a regular solution as long as it exists.

\end{itemize}

The strategy is to identify a large class of generalized solutions to the problem that goes beyond the standard framework of weak solutions. These are the so called \emph{dissipative weak solutions} similar to those introduced in the monograph \cite{FeLMMiSh}.  Although 
dissipative  weak  solutions are more general objects than the weak solutions, they still comply with the weak--strong uniqueness principle. 
A dissipative weak solution coincides with the strong solution originating from the same initial data. Then we show that any sequence 
of numerical solutions that is stable and consistent converges to a dissipative  weak solution. Applying the weak--strong uniqueness principle, we finally prove unconditional convergence to the strong solution, provided the latter exists.

\section{Dissipative weak solutions}
\label{D}

In accordance with the choice of the boundary conditions \eqref{ibc}, we will use the same symbol $\Omega$ to denote 
either the flat torus $\mathcal{T}^d$ or a bounded domain in $\R^d$. The symbol $\mathcal{M}(\Omega; X)$ denotes the set of (Radon) 
measures on $\Omega$ ranging in a (finite--dimensional) space $X$, $\mathcal{M}^+$ is the cone of non--negative scalar--valued measures.

The anticipated integrability properties of dissipative solutions are in agreement with the energy balance \eqref{i5}: 
\begin{equation} \label{D1}
\begin{split}
\sqrt{\vr} \vu &\in L^\infty(0,T; L^2(\Omega; \R^d)),\ \cc \in L^\infty(0,T; W^{1,2}(\Omega)),\ 
P(\vr) \in L^\infty(0,T; L^1(\Omega))\\ 
\vu &\in L^2(0,T; W^{1,2}(\Omega; \R^d)),\ \cc \in L^2(0,T; W^{2,2}(\Omega)).
\end{split}
\end{equation}

\begin{Definition} [Dissipative  weak  solution] \label{DD1}

A trio $\{ \vr, \vu, \cc \}$ is called \emph{dissipative weak solution} of the problem \eqref{i1}, \eqref{i2} in 
$(0,T) \times \Omega$, with the boundary 
conditions \eqref{i3} (or \eqref{i4}) and the initial conditions 
\[
\vr(0, \cdot) = \vr_0,\ \vr \vu(0, \cdot) = (\vr \vu)_0, \ \cc(0, \cdot) = \cc_0,
\]
if the following is satisfied:
\begin{itemize}
\item {\bf Regularity.} The solution belongs to the class \eqref{D1}. Moreover, 
\[
\vr \geq 0 \ \mbox{a.a. in}\ (0,T) \times \Omega.
\]

\item {\bf Equation of continuity.} 

\begin{equation} \label{D2}
\int_0^T \intO{ \Big[ \vr \partial_t \varphi + \vr \vu \cdot \Grad \varphi \Big] } \dt = 
- \intO{ \vr_0 \varphi(0, \cdot) }
\end{equation}
for any $\varphi \in C^1_c([0,T) \times  \Ov{\Omega} )$.

\item {\bf Momentum equation.} 

\begin{equation} \label{D3}
\begin{split}
\int_0^T &\intO{ \Big[ \vr \vu \cdot \partial_t \bfphi + \vr \vu \otimes \vu : \Grad \bfphi +  p(\vr) 
 \Div \bfphi \Big] } \dt 
\\ &= \int_0^T \intO{ \left[ \mathbb{S}(\Grad \vu) : \Grad \bfphi  + (\Lap \cc -F'(\cc) ) \Grad \cc \cdot   \bfphi  \right] } \dt\\
&- \intO{ (\vr \vu)_0 \cdot \bfphi(0, \cdot) } + \int_0^T \int_{\Omega} \Grad \bfphi : \D \mathfrak{R}(t) \dt
\end{split}
\end{equation}
for any $\bfphi \in C^1_c([0,T) \times \Omega; \R^d)$, where 
\[
\mathfrak{R} \in L^\infty(0,T; \mathcal{M} (\Omega; \R^{d \times d}_{\rm sym})). 
\]
In the case of the Dirichlet boundary conditions \eqref{i4}, we also require 
\[
\vu \in L^2(0,T; W^{1,2}_0 (\Omega; \R^d)).
\]

\item {\bf Allen--Cahn equation for the concentration difference.}

\begin{equation} \label{D4}
\begin{split}
\int_0^T \intO{ \Big[ \cc \partial_t \varphi - \vu \cdot \Grad \cc \varphi \Big] } \dt &= - \int_0^T \intO{ \mu \varphi } \dt - 
\intO{ \cc_0 \varphi (0, \cdot) },\\ 
\mu &= \Del \cc - F'(\cc)
\end{split}
\end{equation} 
for any $\varphi \in C^1_c ([0,T) \times \Omega)$. If \eqref{i4} is imposed, we require 
\[
\cc \in L^\infty(0,T; W^{1,2}_0(\Omega)).
\]

\item {\bf Energy balance.} 

\begin{equation} \label{D5}
\begin{split}
&\intO{ \left[ \frac{1}{2} \vr |\vu|^2 + \frac{1}{2} |\Grad \cc |^2 + P(\vr) + F(\cc) \right](\tau, \cdot) }\\ 
&+ \int_0^\tau \intO{ \mathbb{S}(\Grad \vu) : \Grad \vu } + \intO{ |\Del \cc - F'(\cc)|^2 } \dt 
+ \int_\Omega \D \mathfrak{E} (\tau) \\
&\leq \intO{ \left[ \frac{1}{2} \frac{ |(\vr \vu)_0|^2 }{\vr_0} + \frac{1}{2} |\Grad \cc_0 |^2 + P(\vr_0) + F(\cc_0) \right] }
\end{split}
\end{equation}
for a.a. $\tau \in (0,T)$, where 
\[
\mathfrak{E} \in L^\infty(0,T; \mathcal{M}^+ ({\Ov{\Omega} })).
\] 

\item {\bf Defect compatibility.} 

\begin{equation} \label{D6}
| \mathfrak{R} (\tau) | \aleq \mathfrak{E}(\tau) 
\end{equation}
for a.a. $\tau \in (0,T)$.

\end{itemize}

\end{Definition}

\begin{Remark} \label{DR1}

The inequality \eqref{D6} can be interpreted that there exists a constant $\Lambda > 0$ such that  
\[
\Big( \Lambda \mathfrak{E} \mathbb{I} \pm \mathfrak{R} \Big) (\tau) \geq 0,
\]
meaning 
\[
\int_\Omega \phi \xi \otimes \xi : \D \left(
\Lambda \mathfrak{E}(\tau) \pm \mathfrak{R} \right) (\tau) \geq 0
\]
for any $\phi \in C_c(\Omega)$, $\phi \geq 0$, $\xi \in \R^d$. 

\end{Remark}

The measure $\mathfrak{R}$ can be viewed as the sum of a concentration and oscillation defects related to the non--linearities in 
the momentum balance. Similarly, the measure $\mathfrak{E}$ results from the defect in due to possible ``anomalous'' energy dissipation. The compatibility property \eqref{D6} is absolutely crucial for the weak--strong uniqueness principle. A more elaborate 
discussion concerning this approach can be found in the monograph \cite{FeLMMiSh}.

\section{Relative energy inequality}\label{REI}

The {\it relative energy} is a non-negative quantity that represents a ``distance'' between 
a dissipative weak solution in the sense of Definition \ref{DD1} and any trio of suitable smooth test functions. 
It can be also interpreted as the Bregman distance generated by the energy functional, cf. Sprung \cite{Sprung}.  
In this section we derive a differential inequality satisfied by the relative energy. Later we use it to evaluate the distance between a dissipative weak solution for the problem \eqref{i1}-\eqref{i2} and a more regular one. This technique, introduced by Dafermos in \cite{Daf4}, was largely used in order to prove weak-strong uniqueness for the solutions  of different types of partial differential equations (see for example \cite{FeJiNo,Germ,Wi}) as well as to study certain singular limits as for example incompressible, inviscid limits of compressible, viscous fluids (see \cite{feinov,Ma,SaR}).

Let $R$, $R>0$, $\vU$, and $\CC$ be arbitrary continuously differentiable functions. We define the \emph{relative energy} as 
\begin{align*}
\mathcal{E} &\left( \vr, \vu,\cc \ \Big|  R, \vc{U}, \CC \right) \\ &= 
\intTN{ \left[ \frac{1}{2} \vr |\vu - \vc{U} |^2 +(\cc-\CC)^2  + \frac{1}{2} |\Grad \cc - \Grad \CC|^2 
+ P(\vr) - P'(R)(\vr - R) - P(R) \right] }.
\end{align*}
In contrast with its counterpart introduced in \cite{FePePr}, the present relative energy is augmented 
by the term $(\cc-\CC)^2$ to compensate the absence of Poincar\' e inequality in the periodic case.  

In order to facilitate the computations, we can decompose the relative energy $\mathcal{E}$ as follows:
\[
\mathcal{E} \left( \vr, \vu,\cc \ \Big| R, \vc{U}, \CC \right) =  \sum_{j=1}^5 I_j,
\]
with 
\begin{align*}
&I_1 = \intTN{ \left[ \frac{1}{2} \vr |\vu|^2 + P(\vr) + \cc^2+ \frac{1}{2} |\Grad \cc|^2 \right] },
\quad  
I_2 = \intTN{ \vr \left[ \frac{1}{2} |\vc{U}|^2  - P'(R) \right] },
\\&
I_3 = - \intTN{ \vr \vu \cdot \vc{U} },\quad  
I_4 = - \intTN{ \Grad \cc \cdot \Grad \CC +2 \cc \CC},
\\ &
I_5 = \intTN{ \left[\CC^2+ \frac{1}{2} |\Grad \CC|^2 + p(R) \right] }, {\textrm{ where }}\ p(R)
= P'(R) R - P(R).
\end{align*}

Let us now state and prove the main result of this section:
\bTheorem{TRE}
Let $(\vr, \vu, \cc)$ be a dissipative weak solution to problem \eqref{i1}-\eqref{i2}, \eqref{ibc}, in the sense specified in Definition \ref{DD1}. 

Then, for any continuously differentiable functions satisfying 
$$
R \in C^1([0,T] \times \Omega),\ R > 0,\ 
\vU \in C^1([0,T] \times \Omega; \R^d),\ 
\ \CC, \Grad \CC, \Del \CC \in C^1([0,T] \times \Omega),
$$
and, in the case of the Dirichlet boundary conditions, also
\[
\vU|_{\partial \Omega} = 0,\ \CC|_{\partial \Omega} = 0,
\]
the following relative energy inequality holds:
\begin{equation}\label{rel_ener}
\begin{split}
&\left[ \mathcal{E}\left( \vr, \cc, \vu \ \Big| R, \CC, \vc{U} \right) \right]_{t = 0}^{ t = \tau} +  \int_0^\tau \intTN{ \Big[ \mathbb{S}(\Grad \vu-\Grad \vc{U}) : \left( \Grad \vu 
- \Grad \bf{U} \right)  +  \mu^2 \Big] } \dt + \int_{ \Ov{\Omega} } {\rm d} \EE(\tau)
\\& 
\leq - \int_0^\tau \intTN{ \mu(\Del \cc -\mu -2 \cc)} \dt
-  \int_0^\tau \intTN{ (\Div \vu -\Div \vc{U} )W(\cc)} \dt 
\\&+\int_0^\tau \intTN{ \vr(\vc{U}- \vu)\cdot \partial_t \vc{U}} \dt
+ \int_0^\tau \intTN{ (R-\vr)\partial_t P'(R) } \dt
\\&
- \int_0^\tau \intTN{\vr \vu \cdot \Grad P'(R)}\dt 
+ \int_0^\tau \intTN{ \vr \vu \cdot  \Grad \vc{U} \cdot (\vc{U}-\vu) } \dt 
\\&
- \int_0^\tau \intTN{(p(\vr )-\cc^2) \Div \vc{U}} \dt
- \int_0^\tau \intTN{ \left( \Grad c \otimes \Grad c - \frac{1}{2} |\Grad c|^2 \mathbb{I} \right) : \Grad {\bf{U}} } \dt
\\&
+ \int_0^\tau \int_{\Omega}{  \Grad {\bf{U}} : {\rm d}\RR(t) } \dt
+ \int_0^\tau \intTN{ (\mu - \vu \cdot \Grad \cc) (\Del \CC - 2 \CC)} \dt 
\\&
+ \int_0^\tau \intTN{ (\cc-\CC) (\Del \CC_t- 2\CC_t)} \dt 
- \int_0^\tau \intTN{ \Big[ \mathbb{S}(\Grad \vc{U}) : \left( \Grad \vu 
- \Grad \bf{U} \right) \Big] } \dt
\end{split}
\end{equation}
for a.a. $\tau \in (0,T)$.
\eT
\begin{proof}
Since a dissipative weak solution satisfies the energy inequality \eqref{D5}, the term $I_1$ from the relative energy is estimated as follows:
\begin{equation} \label{R1}
\begin{split}
\left[ I_1 \right]_{t = 0}^{t = \tau} &= 
\left[ \intTN{ \left[  \frac{1}{2} \vr |\vu|^2 + P(\vr) + \cc^2 + W(\cc)
+ \frac{1}{2} |\Grad \cc|^2 \right] } \right]_{t = 0}^{t = \tau} - 
\left[ \intO{ W (\cc) } \right]_{t=0}^{t = \tau}
\\&
\leq -\int_{\Ov{\Omega}}{{\rm d}\EE(\tau)}  - \int_0^\tau \intTN{ \Big[ \mathbb{S}(\Grad \vu) : \Grad \vu +  \mu^2 \Big] } 
- \left[ \intO{ W(\cc) } \right]_{t=0}^{t = \tau},
\end{split}
\end{equation}
 where $W(\cc)=F(\cc)-\cc^2$.

For the last term in \eqref{R1}, we use the Allen-Cahn equation for the concentration $\cc$ and we get:
\begin{equation} \label{R1b}
\begin{split}
& \left[ \intO{ W(\cc) } \right]_{t=0}^{t = \tau}=-\int_0^\tau \intO{\partial_t W(\cc)} \dt=-\int_0^\tau \intO{\partial_t \cc W'(\cc)} \dt\\
& =- \int_0^\tau \intO{(\mu - \vu \cdot \Grad \cc)  W'(\cc)} \dt\\
&=- \int_0^\tau \intO{\mu(\Lap  \cc - \mu -2 \cc)} \dt+ \int_0^\tau \intO{\vu \cdot\Grad W(\cc)} \dt\\
&=- \int_0^\tau \intO{\mu(\Lap  \cc - \mu -2 \cc)} \dt- \int_0^\tau \intO{\Div \vu\  W(\cc)} \dt.
\end{split}
\end{equation}

The estimates for $I_2$ are obtained testing the continuity equation by $\varphi=\dfrac12 |\vc{U}|^2-P'(R)$. Remark that the choice of $\varphi$  is possible thanks to the regularity of the functions $R$ and $\vc{U}$. We obtain:
\begin{equation} \label{R4}
\begin{split}
[ I_2 ]_{t = 0}^{ t = \tau} &= \int_0^\tau \intTN{ \left[ \vr \Big(
\vc{U} \cdot \partial_t \vc{U}  -  \partial_t P'(R) \Big)   
\right] } \dt \\&+ \int_0^\tau \intTN{ \left[ \vr \vu \cdot \Big(  \Grad^t \vc{U} \cdot  \vc{U}   
 - \Grad P'(R) \Big)   
\right] } \dt. 
\end{split}
\end{equation}

Adding the inequalities for $I_1$ and $I_2$, we obtain:
\begin{equation} \label{R5}
\begin{split}
[ I_1 +& I_2 ]_{t = 0}^{ t = \tau} + \int_{\Ov{\Omega}} {\rm d}\EE(\tau)+  \int_0^\tau \intTN{ \Big[ \mathbb{S}(\Grad \vu) : \Grad \vu +  \mu^2 \Big] }
\\&
\leq - \int_0^\tau \intTN{  \mu (\Lap  \cc - \mu - 2\cc) } \dt
-  \int_0^\tau \intTN{ \Div \vu \ W(\cc)} \dt 
\\&
+  \int_0^\tau \intTN{(\vr \vc{U} \cdot \partial_t \vc{U} -\vr \partial_t P'(R) )} \dt + \int_0^\tau \intTN{\vr \vu \cdot (  \Grad^t \vc{U} \cdot  \vc{U} - \Grad P'(R) )} \dt. 
\end{split}
\end{equation}

We also test the momentum equation by $\vc{U}$ and get:
\begin{equation}\label{R3}
\begin{split}
[ I_3 ]_{t = 0}^{t = \tau} = 
&-
\int_0^\tau \intTN{ \left[ \vr \vu \cdot \partial_t {\bf{U}} + \vr \vu \cdot  \Grad \bf{U} \cdot \vu  +\big( p(\vr)-\cc^2-W(\cc)\big) \Div \bf{U} \right] } 
\dt 
\\&
+ \int_0^\tau \intTN{ \mathbb{S}(\Grad \vu) : \Grad{ \bf{U}} } \dt  + \int_0^\tau \int_{\Omega}{ \Grad \vc{U} : {\rm d}\ \RR(t)}\dt 
\\&
- \int_0^\tau \intTN{ \left( \Grad c \otimes \Grad c - \frac{1}{2} |\Grad c|^2 \mathbb{I} \right) : \Grad {\bf U} } \dt.
\end{split}
\end{equation}
Summing \eqref{R5} and \eqref{R3}, we have:
\begin{equation} \label{R7}
\begin{split}
[ I_1& + I_2 + I_3 ]_{t = 0}^{ t = \tau} + \int_{\Ov{\Omega}} {\rm d} \EE(\tau) +  \int_0^\tau \intTN{ \Big[ \mathbb{S}(\Grad \vu) : \left( \Grad \vu 
- \Grad \bf{U} \right)  +  \mu^2 \Big] } \dt \\
&\leq - \int_0^\tau \intTN{ \mu(\Del \cc -\mu -2\cc)  } \dt  - \int_0^\tau \intTN{ (\Div \vu - \Div \vc{U})W(\cc)}\dt\\
&+  \int_0^\tau \intTN{ \left[ \vr (\vc{U} -\vc{u})  \cdot \partial_t \vc{U}  + \vr \vu \cdot  \Grad \vc{U}\cdot (\vc{U} - \vc{u})  \right] } \dt \\
&- \int_0^\tau \intTN{ \left[ \vr \partial_t P'(R) + \vr \vu \cdot
\Grad P'(R) \right] } \dt 
- \int_0^\tau \intTN{( p(\vr)-\cc^2) \Div \vc{U} } \dt\\
&- \int_0^\tau \intTN{ \left( \Grad c \otimes \Grad c - \frac{1}{2} |\Grad c|^2 \mathbb{I} \right) : \Grad {\bf{U}} } \dt
+ \int_0^\tau  \int_{\Omega} \Grad {\bf{U}}: {\rm d} \RR(t)  \dt. 
\end{split}
\end{equation}
For the last two terms in the relative energy, we use the Allen-Cahn equation. Thus:
\begin{equation} \label{R6}
\begin{split}
[I_4]_{t = 0}^{t = \tau} &= - \left[ \intTN{ \Grad \cc \cdot \Grad \CC+ 2 \cc \CC} \right]_{t = 0}^{t = \tau} = \left[ \intTN{  \left[\Del \CC-2\CC \right] \cc} 
\right]_{t = 0}^{t = \tau} \\ &= 
\int_0^\tau \intTN{ \partial_t \left[\Del \CC -2 \CC\right]\cc } \dt + 
\int_0^\tau \intTN{ \left[\Del \CC-2\CC\right] \partial_t \cc } \dt  \\
&= \int_0^\tau \intO{ \partial_t \left[\Del \CC-2\CC\right] \cc } \dt 
+ \int_0^\tau \intTN{ \left[\Del \CC-2\CC \right] \left[ \mu - \vu \cdot \Grad \cc \right] } \dt, 
\end{split}
\end{equation}
and
\begin{equation} \label{R6b}
\begin{split}
[I_5]_{t = 0}^{t = \tau} &= -  
\int_0^\tau \intTN{ ( \CC \ \Lap  \CC_t -2 \CC \ \CC_t)} \dt + 
\int_0^\tau \intTN{R \partial_t P'(R) } \dt.  
\end{split}
\end{equation}
 thanks to the equality $\pd_t p(R) =\pd_t(P'(R)R-P(R)) = R \pd_tP'(R)$. 

Adding \eqref{R6} and \eqref{R6b} to \eqref{R7}, we obtain the desired relative energy inequality.

\end{proof}

\section{Weak-strong uniqueness}\label{WSU}

In this section we prove that a dissipative weak solution and a strong solution for the compressible Navier-Stokes-Allen-Cahn problem \eqref{i1}--\eqref{i2}, \eqref{i3}/\eqref{i4}, both emanating from the same initial data, coincide on the life span of the strong solution. More exactly, we prove the following result:
\bTheorem{T_wsu} Let the initial data $(\vr_0, \vm_0, \cc_0)$ be given such that the initial energy is finite
$$
\intO{ \left[ \frac{1}{2} \frac{ |\vm_0|^2 }{\vr_0} + \frac{1}{2} |\Grad \cc_0 |^2 + P(\vr_0) + F(\cc_0) \right] }< \infty.
$$
Let $(\vr, \vu, \cc)$ be a dissipative weak solution of the problem \eqref{i1}--\eqref{i2},  \eqref{i3}/\eqref{i4} in $(0, T)\times \Omega$ in the sense of Definition~\ref{DD1}, with the initial data $(\vr_0, \vm_0, \cc_0)$. Suppose that $(R, \vc{U}, \CC)$ is a strong solution of the same problem belonging to the class:
\begin{equation}\label{cond_strong}
\begin{split}
& \inf_{(0, T)\times \Omega}R >0, \, R \in \mathcal{C}^1([0, T] \times \Ov \Omega),\\
& \vc{U} \in C^1([0, T] \times \Ov \Omega; \R^N), \> \Div \mathbb{S}(\Grad {\bf U}) \in C([0, T] \times \Ov \Omega; \R^N),\\
& \CC, \ \Grad \CC, \ \Del \CC \in  \mathcal{C}^1([0, T] \times \Ov \Omega),
\end{split}
\end{equation}
and such that
$$
R(0, \cdot)=\vr_0, \, R(0, \cdot) \vc{U}(0, \cdot)= \vm_0, \, \CC(0, \cdot)=\cc_0.
$$

Then
$$
\vr=R, \, \vu=\vc{U}, \, \cc=\CC {\textrm{ in } } (0, T)\times \Omega,
$$
and
$$
\EE=\RR=0.
$$
\eT
\begin{proof}
The idea of the proof is to test the relative energy inequality by the strong solution and use the Gronwall inequality in order to obtain the desired result. Let us proceed by considering the following terms from the relative energy inequality:
\begin{align*}
J&=\int_0^\tau \intTN{ \left[ \vr (\vc{U} -\vc{u})  \cdot \partial_t \vc{U}  + \vr \vu \cdot   \Grad \vc{U} \cdot (\vc{U} - \vc{u})  \right] } \dt - \int_0^\tau \intTN{\mathbb{S}(\Grad {\bf U}) : \left( \Grad \vu 
- \Grad \bf{U} \right) } \dt  \\
&= \int_0^\tau \intTN{ \left[ \frac{\vr}{R} (\vc{U} -\vc{u})  \cdot R
\partial_t \vc{U} \right] } \dt -\int_0^\tau \intTN{ \mathbb{S}(\Grad {\bf U}) : \left(\Grad \vu 
- \Grad \bf{U} \right) } \dt  \\
&+ \int_0^\tau \intTN{ \vr  \vc{U} \cdot   \Grad \vc{U} \cdot (\vc{U} - \vc{u})  } \dt +\int_0^\tau \intTN{  \vr (\vu- \vc{U} ) \cdot   \Grad \vc{U} \cdot (\vc{U} - \vc{u}) } \dt\\
&=\sum_{i=1}^4J_i.
\end{align*}

For $J_1+J_2$ we use the equation of momentum and get:
\begin{equation}\begin{split}
J_1+J_2= &- \int_0^\tau \intTN{ \frac{\vr}{R} (\vc{U} -\vc{u}) \cdot \left[  \Div \mathbb{S}(\Grad \vc{U}) - \Div \left( \Grad \CC \otimes \Grad \CC - \frac{1}{2} |\Grad \CC|^2 \mathbb{I} \right)  -\vc{U} \partial_t R\right]}\dt\\
& -\int_0^\tau \intTN{ \mathbb{S}(\Grad {\bf U}) : \left(\Grad \vu 
- \Grad \bf{U} \right) } \dt  -\int_0^\tau \intTN{ \frac{\vr}{R} (\vc{U} -\vc{u}) \cdot \Div \left( R \vc{U} \otimes \vc{U}\right)}\dt \\
&- \int_0^\tau \intTN{\frac{\vr}{R} (\vc{U} -\vc{u}) \cdot \Grad\left(p(R)-\CC^2 - W(\CC)\right)}\dt\\
= &- \int_0^\tau \intTN{ \frac{\vr}{R} (\vc{U} -\vc{u}) \cdot \left[\Grad \CC \Del \CC +R   \Grad^t \vc{U} \cdot  \vc{U} \right]}\dt\\
&- \int_0^\tau \intTN{ \frac{\vr-R}{R} (\vc{U} -\vc{u}) \cdot  \Div \mathbb{S}(\Grad \vc{U})}\\
&- \int_0^\tau \intTN{\frac{\vr}{R} (\vc{U} -\vc{u}) \cdot \Grad\left(p(R)-\CC^2 - W(\CC)\right)}\dt.
\end{split}\end{equation}
which implies that:
\begin{equation}\label{j123}
\begin{split}
&J_1+J_2 +J_3=\\
&- \int_0^\tau \intTN{ \frac{\vr}{R} (\vc{U} -\vc{u}) \cdot \Grad \CC \Del \CC}\dt
- \int_0^\tau \intTN{ \frac{\vr-R}{R} (\vc{U} -\vc{u}) \cdot  \Div \mathbb{S}(\Grad \vc{U})}\dt\\
&- \int_0^\tau \intTN{\frac{\vr}{R} (\vc{U} -\vc{u}) \cdot \Grad\left(p(R)-\CC^2 - W(\CC)\right)}\dt.
\end{split}\end{equation}
The last term in $J$ can be easily estimated as follows:
$$
|J_4|\leq \int_0^\tau |\Grad \vc{U}|_{L^\infty(\Omega)} \mathcal{E}(\vr, \vu, \cc| R, \vc{U}, \CC)(t) \dt.
$$
Using exactly the same arguments as in \cite{FeJiNo}, we also estimate:
\begin{equation}\begin{split}
\big|\int_0^\tau \intTN{ \frac{\vr-R}{R} (\vc{U} -\vc{u}) \cdot  \Div \mathbb{S}(\Grad \vc{U})}\dt\big|\leq & \delta\int_0^\tau \intO{ \left( \mathbb{S}(\Grad \vu) - \mathbb{S}(\Grad \vU) \right): \left( \Grad \vu 
- \Grad \bf{U} \right) }\dt\\
&+ c(\delta) \int_0^\tau \mathcal{E}(\vr, \vu, \cc| R, \vc{U}, \CC)(t) \dt,
\end{split}
\end{equation}
for any $\delta>0$, where $c(\delta)$ is a positive constant depending on $\delta$ and on certain norms of $R$ and ${\bf U}$. The estimate is based on the Korn-Poincar\'e inequality
{(see e.g.\ \cite{feinov})}:
\begin{equation} \label{KoPo}
\intO{ \left| \vu - \vU \right|^2 + \left| \Grad (\vu - \vU) \right|^2 } 
\leq c_{kp} \intO{ \left[\left( \mathbb{S}(\Grad \vu) - \mathbb{S}(\Grad \vU) \right): \left( \Grad \vu 
- \Grad \bf{U} \right) +\vr \left| \vu - \vU \right|^2\right]}.  
\end{equation}

The last term in \eqref{j123} we split it:
\begin{equation}\begin{split}
&\int_0^\tau \intTN{\frac{\vr}{R} (\vc{U} -\vc{u}) \cdot \Grad\left(p(R)-\CC^2 - W(\CC)\right)}\dt=\int_0^\tau \intTN{ (\vc{U} -\vc{u}) \cdot \Grad\left(p(R)-\CC^2 - W(\CC)\right)}\dt\\
&+\int_0^\tau \intTN{\frac{\vr-R}{R} (\vc{U} -\vc{u}) \cdot \Grad\left(p(R)-\CC^2 - W(\CC)\right)}\dt.
\end{split}\end{equation}
Using the fact that $R$ and $\CC$ are regular enough, we can bound
\begin{equation}\label{drR}
  \big|\int_0^\tau \intTN{\frac{\vr-R}{R} (\vc{U} -\vc{u}) \cdot \Grad\left(p(R)-\CC^2 - W(\CC)\right)}\dt \big| \leq c(R, \CC) \int_0^\tau \intTN{|\vr-R| |\vc{U} -\vc{u}| }\dt,
\end{equation}
where $c(R, \CC)$ is a positive constant depending on the norms of $R$ and $\CC$.

 Using the same arguments as in \cite{FeJiNo} and \cite{FePePr}, we now introduce the following cut-off function:
 \[\Psi \in \DC(0, \infty), \,
0 \leq \Psi \leq 1, \ \Psi \equiv 1 \ \mbox{in} \ [\delta, \frac{1}{\delta}], 
\]
where $\delta$ is chosen so small that 
\[
R(t,x) \in [2 \delta, \frac{1}{2 \delta}]\ \mbox{for all}\ (t,x) \in [0,T] \times \Ov{\Omega}.
\]
For any function $h \in L^1((0,T) \times \Omega)$, we set the following splitting
\[
h = h_{{\rm ess}} + h_{\rm res},\  h_{{\rm ess}} = \Psi (\vr) h,\ 
h_{\rm res} = (1 - \Psi (\vr)) h.
\]
We can thus continue to estimate in \eqref{drR} as:
\begin{equation}\label{}
   \int_0^\tau \intTN{|\vr-R| |\vc{U} -\vc{u}| }\dt \leq  \int_0^\tau \intTN{|\vr-R|_{\rm ess} |\vc{U} -\vc{u}|_{\rm ess} }\dt +  \int_0^\tau \intTN{|\vr-R|_{\rm res} |\vc{U} -\vc{u}| }\dt,
\end{equation}
where we used the fact that:
\begin{equation}	\label{yng1}
 \left| [ \vr - r ]_{\rm ess} \right| \ | \vc{U} -\vc{u} |
 = \sqrt{ [ \vr - r ]^2_{\rm ess} }	
	\sqrt{ | \vc{U} -\vc{u} |^2_{\rm ess}}.
\end{equation}
It can be easily checked that 
\begin{equation}	\label{scFe}
P(\vr) - P(r)(\vr-r) - P(r) \ageq ( \vr - r )^2_{\rm ess}
+ (1 + \vr)_{\rm res} \,.
\end{equation}
We can thus write
\begin{equation} \label{ineq}
\begin{split}
\mathcal{E} \left( \vr, \vu, c \ \Big| \ r , \vc{U}, C \right) \ageq
\intO{ \left( |\vu - \vc{U} |^2_{\rm ess} + [ \vr - r ]^2_{\rm ess} + 1_{\rm res} + \vr_{\rm res} \right)},  
\end{split} 
\end{equation}
which allows us to conclude that:
$$
\int_0^\tau \intTN{|\vr-R|_{\rm ess} |\vc{U} -\vc{u}|_{\rm ess} }\dt \aleq \mathcal{E} \left( \vr, \vu, c \ \Big| \ r , \vc{U}, C \right).
$$
We also know that:
\begin{equation}	\label{yng2}
 \left| [ \vr - r ]_{\rm res} \right| \ | \vc{U} -\vc{u} |
\aleq 
1_{\rm res} | \vc{U} -\vc{u} | + \sqrt{\vr_{\rm res}}
\sqrt{\vr} | \vc{U} -\vc{u} |,
\end{equation}
which implies: 
\begin{align*}
\int_0^\tau &\intTN{  \left| [ \vr - r ]_{\rm res} \right| \ | \vc{U} -\vc{u} |  } \dt 
\\ &\leq c(\delta) \int_0^\tau \intTN{ 1_{\rm res} + \vr_{\rm res} + \vr |\vu - \vU|^2 } \dt
+ \delta 
 \intO{ \left( \mathbb{S}(\Grad \vu) - \mathbb{S}(\Grad \vU) \right): \left( \Grad \vu - \Grad \bf{U} \right) }, 
\end{align*} 
where we use again the Korn-Poincar\'e inequality \eqref{KoPo}.

Gathering all these estimates, we obtain the following inequality:
\begin{equation}\label{rel_ener1}
\begin{split}
&\left[ \mathcal{E}\left( \vr, \cc, \vu \ \Big| R, \CC, \vc{U} \right) \right]_{t = 0}^{ t = \tau} +  \int_0^\tau \intTN{ \Big[(1-2\delta) \mathbb{S}(\Grad \vu-\Grad \vc{U}) : \left( \Grad \vu 
- \Grad \bf{U} \right)  +  \mu^2 \Big] } \dt + \int_{\Ov{\Omega}} {\rm d} \EE(\tau)\\
& \leq - \int_0^\tau \intTN{ \mu(\Del \cc -\mu -2 \cc)} \dt
-  \int_0^\tau \intTN{ (\Div \vu -\Div \vc{U} )W(\cc)} \dt \\
&+\int_0^\tau \intTN{\dfrac{\vr}{R}(\vu -\vc{U}) \cdot \Grad \CC \Del \CC}\dt
+ \int_0^\tau \intTN{ (R-\vr)\partial_t P'(R) } \dt\\
&- \int_0^\tau \intTN{(\vc{U}- \vu) \cdot \Grad p(R)}\dt + \int_0^\tau \intTN{(\vc{U}- \vu) \cdot \Grad\left[\CC^2+W(\CC) \right]}\dt\\
&-\int_0^\tau \intTN{\vr \vu \cdot \Grad P'(R)}\dt
- \int_0^\tau \intTN{(p(\vr )-\cc^2) \Div \vc{U}} \dt\\
&- \int_0^\tau \intTN{ \left( \Grad c \otimes \Grad c - \frac{1}{2} |\Grad c|^2 \mathbb{I} \right) : \Grad {\bf{U}} } \dt
- \int_0^\tau \int_\Omega  \Grad \bf{U} : {\rm d}\RR(t)  \dt\\
&+ \int_0^\tau \intTN{ (\mu - \vu \cdot \Grad \cc) (\Del \CC - 2 \CC)} \dt 
+ \int_0^\tau \intTN{ (\cc-\CC) (\Del \CC_t- 2\CC_t)} \dt\\
&+ c(\delta, R, \CC, \vc{U})\int_0^\tau \mathcal{E}\left( \vr, \cc, \vu \ \Big| R, \CC, \vc{U} \right)(t) \dt.
\end{split}
\end{equation}

Now, notice that the continuity equation for $R$ and $\vc{U}$ also implies that
\begin{equation}\label{renR}
\partial_t P'(R) + \vc{U} \cdot \Grad P'(R) + R P''(R) \Div \vc{U}=0,
\end{equation}
with $R P''(R)=p'(R)$.

Using \eqref{renR}, we can handle the terms related to the elastic pressure $P$:
\begin{equation} 	\label{elpe}
\begin{split}
&- \int_0^\tau \intTN{ (\vc{U}- \vu) \cdot
\Grad p(R)  } \dt 
- \int_0^\tau \intTN{ p(\vr) \Div \vc{U} } \dt
\\
&- \int_0^\tau \intTN{ \vr \vc{u}  \cdot \Grad P'(R) } \dt + 
\int_0^\tau \intTN{(R-\vr) \partial_t P'(R) } \dt\\
&=\int_0^\tau \intTN{ \Div \vc{U} (p(R)-p(\vr)-(R-\vr)p'(R) ) } \dt + \int_0^\tau \intTN{ \vu \cdot \Grad p(R)}\dt\\
& - \int_0^\tau \intTN{(R-\vr) (\vc{U}-\vu) \cdot \Grad P'(R) }\dt  - \int_0^\tau \intTN{R \vu \cdot \Grad P'(R)}\dt.
\end{split}
\end{equation}

For the first term from the right hand side of \eqref{elpe} we have the following bound:
\begin{equation}
\big| \int_0^\tau \intTN{ \Div \vc{U} (p(R)-p(\vr)-(R-\vr)p'(R) ) } \dt\big| \aleq 
\int_0^\tau \mathcal{E}\left( \vr, \cc, \vu \ \Big| R, \CC, \vc{U} \right)(t) \dt,
\end{equation}
where we used the fact that:
\begin{equation}	\label{point-pF}
\big| p(R) - p'(R)(R-\vr) - p(\vr) \big|
\aleq
\big| P(\vr) - P'(R)(\vr - R) - P(R) \big|.
\end{equation}
The second and the last term from the right hand side of \eqref{elpe} cancel since $\Grad p(R)= R \Grad P'(R)$ and the third term is estimated as in \eqref{drR}:
\begin{equation}
    \begin{split}
    &\big|  \int_0^\tau \intTN{(R-\vr) (\vc{U}-\vu) \cdot \Grad P'(R) }\dt\big|\leq  \int_0^\tau \intTN{|R-\vr| |\vc{U}-\vu| } \dt\\
  & \leq  \delta \int_0^\tau \intO{ \left( \mathbb{S}(\Grad \vu) - \mathbb{S}(\Grad \vU) \right): \left( \Grad \vu 
- \Grad \bf{U} \right) }\dt
+ c(\delta, R, \vc{U}) \int_0^\tau \mathcal{E}(\vr, \vu, \cc| R, \vc{U}, \CC)(t) \dt.     
    \end{split}
\end{equation}
Returning to \eqref{rel_ener1}, we obtain:
\begin{equation}\label{rel_ener2}
\begin{split}
&\left[ \mathcal{E}\left( \vr, \cc, \vu \ \Big| R, \CC, \vc{U} \right) \right]_{t = 0}^{ t = \tau} +  \int_0^\tau \intTN{ \Big[(1-3\delta) \mathbb{S}(\Grad \vu-\Grad \vc{U}) : \left( \Grad \vu 
- \Grad \bf{U} \right)  +  \mu^2 \Big] } \dt + \int_{\Ov{\Omega}} {\rm d} \EE(\tau)\\
& \leq - \int_0^\tau \intTN{ \mu(\Del \cc -\mu -2 \cc)} \dt
-  \int_0^\tau \intTN{ (\Div \vu -\Div \vc{U} )W(\cc)} \dt \\
&+\int_0^\tau \intTN{\dfrac{\vr-R}{R}(\vu -\vc{U}) \cdot \Grad \CC \Del \CC}\dt 
+\int_0^\tau \intTN{(\vu -\vc{U}) \cdot \Grad \CC \Del \CC}\dt\\
 &+ \int_0^\tau \intTN{(\vc{U}- \vu) \cdot \Grad \CC^2 }\dt+ \int_0^\tau \intTN{(\vc{U}- \vu) \cdot \Grad W(\CC)}\dt\\
&+ \int_0^\tau \intTN{\cc^2 \Div \vc{U}} \dt
- \int_0^\tau \intTN{ \left( \Grad c \otimes \Grad c - \frac{1}{2} |\Grad c|^2 \mathbb{I} \right) : \Grad {\bf{U}} } \dt\\
&- \int_0^\tau \int_{\Omega}  \Grad {\bf{U}} : {\rm d}\RR(t)  \dt+ \int_0^\tau \intTN{ \mu   (\Del \CC - 2 \CC)} \dt - \int_0^\tau \intTN{  \vu \cdot \Grad \cc (\Del \CC - 2 \CC)} \dt\\
&+ \int_0^\tau \intTN{ ( \cc- \CC) (\Del \CC_t- 2\CC_t)} \dt+ c(\delta, R, \CC, \vc{U})\int_0^\tau \mathcal{E}\left( \vr, \cc, \vu \ \Big| R, \CC, \vc{U} \right)(t) \dt.
\end{split}
\end{equation}
We first remark that:
\begin{equation}
    \begin{split}
 &  \big|  \int_0^\tau \intTN{ (\Div \vu -\Div \vc{U} )W(\cc)} \dt   - \int_0^\tau \intTN{(\vc{U}- \vu) \cdot \Grad W(\CC)}\dt\big|\\  
   &= \big| \int_0^\tau \intTN{ (\Div \vu -\Div \vc{U} )(W(\cc) - W(\CC))} \dt  \big| \\
   &\aleq
    \int_0^\tau \intTN{ |\Div \vu -\Div \vc{U}| |\cc - \CC|} \dt \\
   & \leq \delta \int_0^\tau \intO{ \left( \mathbb{S}(\Grad \vu) - \mathbb{S}(\Grad \vU) \right): \left( \Grad \vu 
- \Grad \bf{U} \right) }\dt
+ c(\delta) \int_0^\tau \mathcal{E}(\vr, \vu, \cc| R, \vc{U}, \CC)(t) \dt.    
    \end{split}
\end{equation}
Since $R$ and $\CC$ are regular, we can also write:
\begin{equation}
    \begin{split}
      \big| \int_0^\tau \intTN{\dfrac{\vr-R}{R}(\vu -\vc{U}) \cdot \Grad \CC \Del \CC}\dt \big|   \aleq \int_0^\tau \intTN{|\vr-R||\vu -\vc{U}|}\dt,
    \end{split}
\end{equation}
and the right hand side we estimate identically as in \eqref{drR}.

Using the equation for the concentration $\CC$, we have:
\begin{equation}
    \begin{split}
      &\int_0^\tau \intTN{ ( \cc- \CC) (\Del \CC_t- 2\CC_t)} \dt\\
      &=\int_0^\tau \intTN{\left[\Del (\cc-\CC) - 2(\cc-\CC) \right] \left[ \Del \CC -2 \CC -W'(\CC)\right] } \dt\\
     & -\int_0^\tau \intTN{\Del (\cc-\CC) \vc{U} \cdot \Grad \CC} \dt+2\int_0^\tau \intTN{(\CC-\cc) \vc{U}\cdot \Grad \CC } \dt.
    \end{split}
\end{equation}
Gathering the following terms, we also obtain:
\begin{align*}
\int_0^\tau &\intTN{ (\vu - \vU) \cdot \Grad \CC \Del \CC } \dt  - 
\int_0^\tau \intTN{ \left( \Grad c \otimes \Grad c - \frac{1}{2} |\Grad c|^2 \mathbb{I} \right) : \Grad {\bf{U}} } \dt\\
&
- \int_0^\tau \intTN{ \vu \cdot \Grad \cc \Del \CC } \dt 
- \int_0^\tau \intTN{  (\Del \cc-\Del \CC)\vc{U} \cdot \Grad \CC } \dt 
\\
&= \int_0^\tau \intTN{ \vu \cdot \Grad \CC \Del \CC } \dt  + 
\int_0^\tau \intTN{ \vU \cdot \Grad \cc \Del \cc } \dt\\
&- \int_0^\tau \intTN{ \vc{U} \cdot \Grad \CC \Del \cc } \dt 
- \int_0^\tau \intTN{ \vu \cdot \Grad \cc \Del \CC } \dt \\
& = \int_0^\tau \intTN{ \Del \CC (\Grad \CC - \Grad \cc) \cdot (\vu - \vU) } \dt 
+ \int_0^\tau \intO{ \vU \cdot (\Grad \CC - \Grad \cc) (\Del \CC - \Del \cc ) } \dt\\ 
&= \int_0^\tau \intTN{ \Del \CC (\Grad \CC - \Grad \cc) \cdot (\vu - \vU) } \dt \\&- 
\int_0^\tau \intO{ \Grad \vU : \left[ \Grad (\CC - \cc) \otimes \Grad (\CC-\cc) - \frac{1}{2} |\Grad (\CC-\cc)|^2 \mathbb{I} \right] } \dt,
\end{align*}
terms that can be also bounded by:
$$
 \delta \int_0^\tau \intO{ \left( \mathbb{S}(\Grad \vu) - \mathbb{S}(\Grad \vU) \right): \left( \Grad \vu 
- \Grad \bf{U} \right) }\dt
+ c(\delta, \vc{U}, \CC) \int_0^\tau \mathcal{E}(\vr, \vu, \cc| R, \vc{U}, \CC)(t) \dt.   
$$
We also gather the following convective terms:
\begin{equation}
    \begin{split}
       & \int_0^\tau \intO{(\vc{U}-\vu)\cdot \Grad \CC^2 }\dt +
         \int_0^\tau \intO{ \cc^2 \Div \vc{U}}\dt\\
        & + 2  \int_0^\tau \intO{\vu \cdot \Grad \cc \CC }\dt
         -2\int_0^\tau \intO{(\cc-\CC) \vc{U} \cdot\Grad \CC }\dt\\
       &  =\int_0^\tau \intO{\Div \vc{U} (\CC-\cc)^2 }\dt + 
       2\int_0^\tau \intO{(\vu- \vc{U})\cdot (\Grad \cc- \Grad \CC) \CC }\dt\\
      & \leq \delta \int_0^\tau \intO{ \left( \mathbb{S}(\Grad \vu) - \mathbb{S}(\Grad \vU) \right): \left( \Grad \vu 
- \Grad \bf{U} \right) }\dt
+ c(\delta, \vc{U}, \CC) \int_0^\tau \mathcal{E}(\vr, \vu, \cc| R, \vc{U}, \CC)(t) \dt.   
    \end{split}
\end{equation}
The remaining terms give, after elementary manipulations:
\begin{equation}\label{muM}
    \begin{split}
      &  -\int_0^\tau \intO{\mu(\Del \cc - \mu -2\cc) }\dt + \int_0^\tau \intO{ \mu (\Del \CC-2\CC) }\dt\\
      &  + \int_0^\tau \intO{\left[\Del(\cc-\CC)-2(\cc-\CC) \right] \left[\Del \CC -2\CC -W'(\CC) \right] }\dt\\
     &= \int_0^\tau \intO{\mu^2}\dt - \int_0^\tau \intO{\left[ \Del (\cc-\CC)-2(\cc-\CC)\right] \left[ \mu - \Del \CC -2 \CC -W'(\CC)\right] }\dt\\
     &= \int_0^\tau \intO{\mu^2}\dt - \int_0^\tau \intO{| \Del (\cc-\CC)-2(\cc-\CC)|^2  }\dt\\
     &- \int_0^\tau \intO{\left[ \Del (\cc-\CC)-2(\cc-\CC)\right] \left[ W'(\cc) -W'(\CC)\right] }\dt,
    \end{split}
\end{equation}
where we can bound the last term in \eqref{muM} by:
\begin{equation}\begin{split}
  &  \Big| \int_0^\tau \intO{\left[ \Del (\cc-\CC)-2(\cc-\CC)\right] \left[ W'(\cc) -W'(\CC)\right] }\dt\big| \\
   & \leq \dfrac12 \int_0^\tau \intO{|\Del (\cc-\CC)-2(\cc-\CC)|^2 }\dt + \dfrac12 \int_0^\tau \intO{(\cc-\CC)^2 } \dt.
\end{split}\end{equation}
Gathering all these estimates in \eqref{rel_ener2} and taking $\delta$ small enough, we obtain:
\begin{equation}\label{rel_ener3}
\begin{split}
&\left[ \mathcal{E}\left( \vr,  \vu, \cc \ \Big| R,  \vc{U} , \CC\right) \right]_{t = 0}^{ t = \tau} + \dfrac12 \int_0^\tau \intTN{ \mathbb{S}(\Grad \vu-\Grad \vc{U}) : \left( \Grad \vu 
- \Grad \bf{U} \right) }\dt \\
&+ \int_{\Ov{\Omega}} {\rm d} \EE(\tau) + \dfrac12 \int_0^\tau \intO{|\Del (\cc-\CC)-2(\cc-\CC)|^2 }\dt\\
& \leq 
- \int_0^\tau \int_\Omega  \Grad {\bf{U}} : {\rm d}\RR(t) \dt+ c(\delta, R, \CC, \vc{U})\int_0^\tau \mathcal{E}\left( \vr,  \vu, \cc \ \Big| R, \vc{U}, \CC \right)(t) \dt.
\end{split}
\end{equation}
Using the defect compatibility hypothesis \eqref{D6} (see also Remark~\ref{DR1}), we also have:
\begin{equation}\label{defect}
    \big| \int_0^\tau \intTN{  \Grad {\bf{U}} : {\rm d}\RR(t) } \dt\big| \leq  \Lambda \| \Grad {\bf{U}} \|_{L^\infty((0,T) \times \Omega; \R^{N\times N})}  \int_0^\tau \int_\Omega{{\rm d}\EE}(t) \dt.
\end{equation}
Using \eqref{defect} with \eqref{rel_ener3}, the relative energy inequality reduces to:
\begin{equation}\begin{split}
     \mathcal{E}\left( \vr,  \vu, \cc \ \Big| R,  \vc{U}, \CC \right)(\tau) &+  \int_{\Ov{\Omega}}  {\rm d} \EE(\tau) \leq  \mathcal{E}\left( \vr_0, \vu_0, \cc_0 \ \Big| R(0, \cdot), \vc{U}(0, \cdot), \CC(0, \cdot) \right)\\
   &+ c(\delta, R,  \vc{U}, \CC)\int_0^\tau \left[\mathcal{E}\left( \vr,  \vu, \cc \ \Big| R,  \vc{U}, \CC \right)(t)  + \int_{\Ov{\Omega}} {\rm d} \EE(t) \right]\dt.
\end{split}\end{equation}
Applying the Gronwall lemma, we finally obtain the desired conclusion.
\end{proof}

\section{Convergence of a numerical approximation}
In this section we propose a combined discontinuous Galerkin (DG) -- finite element (FE) method for the approximation of the Navier--Stokes--Allen--Cahn system~\eqref{i1}. Specifically,  for the Navier--Stokes part, we adopt the method studied by Karper~\cite{Karper} as well as Feireisl and Luka{\v c}ov{\' a}~\cite{FeiLuk}. For the Allen-Cahn part, we take a discontinuous Galerkin approximation. 
The main purpose is to analyze the convergence of DG-FE  method using the theoretical study built in the previous sections. 
For the sake of simplicity, we restrict ourselves to the space periodic boundary conditions, meaning $\Omega=\mathcal{T}^d$.

Moreover, we strengthen the hypothesis \eqref{H3} concerning the structural properties of the pressure. Here and hereafter we suppose that
\begin{equation} \label{H3bis}
\begin{split}
&p \in C[0, \infty) \cap C^2(0, \infty),\ 
p(0) = 0, \ p'(\vr) > 0 \ \mbox{for}\ \vr > 0; \\
&\mbox{the pressure potential}\ P 
\ \mbox{determined by}\ P'(\vr) \vr - P(\vr) = p(\vr)\ \mbox{satisfies}\
P(0) = 0,\\ &\mbox{and}\ P - \underline{a} p,\ \Ov{a} p - P\ \mbox{are convex functions 
for certain constants}\ \underline{a} > 0, \ \Ov{a} > 0.
\end{split}
\end{equation} 
As shown in \cite[Section 2.1.1]{AbbFeiNov}, hypothesis \eqref{H3bis} implies that there exists $\gamma > 1$ such that 
\begin{equation} \label{H3a}
P(\vr) \geq a \vr^\gamma \ \mbox{for some}\ a > 0 \ \mbox{and all}\ \vr \geq 1.
\end{equation}

\subsection{Notations} \label{Sec_nt}
We begin by introducing the  notations. We write  $A \aleq B$ if $A \leq cB$ for a generic positive constant $c$ independent of discretization parameters $\TS$ and $h$.  
We denote the norms $\norm{\cdot}_{L^q(\Omega)}$ and $\norm{\cdot}_{L^p(0,T;L^q(\Omega))}$ by $\norm{\cdot}_{L^q}$ and $\norm{\cdot}_{L^pL^q}$, respectively.  Moreover, we denote $\co{A}{B}=[\min\{A,B\}, \max\{A,B\}]$.

\paragraph{Mesh.} Let $\grid = \grid_h$ be a regular and quasi-uniform triangulation of $\Omega\equiv \left([-1,1]|_{\{-1,1\}} \right)^d$ in the sense of Ciarlet~\cite{ciarlet}, where $h$ is the mesh size defined below. %
 Moreover, let $\grid$ be periodic in the sense 
 of Definition~\ref{rmq_pbc}.  
We use the following notations:
\begin{itemize}
\item We denote $K$ a generic element such that $\Omega = \cup_{K\in \grid} K$. For any element $K$ we denote  $|K|$ its volume and $h_K$ its diameter. Further, we define $h=\max_{K\in \grid} h_K$ as the size the mesh. 
\item We denote by $\faces$ the set of all faces,   $\facesK$ the set of faces of an element $K \in \grid$. 
By $|\sigma|$ we denote the volume of the face $\sigma \in \faces$. 
Note that each $\sigma \in \faces$ is an interior edge due to the periodicity assumption, i.e., there exist two different elements $K\in \grid$ and $L\in\grid$ such that $\sigma=\facesK \cap \faces(L)$ for all $\sigma \in \faces$, which we often note $\sigma=K|L$. 
\item  
For each face $\sigma \in \faces$, we denote by $\vn$ its outer normal vector. If furthermore $\sigma \in \facesK$ (resp. $\faces(L)$) we write it as $\vn_K$ (resp. $\vn_L$).
\end{itemize}

Periodic boundary conditions frequently appear in the mathematical physical problems. Their numerical realization is often more complicated than that of Dirichlet or Neumann type boundary conditions. We realize the periodicity by the following definition.  
\begin{Definition}[Periodic mesh]\label{rmq_pbc} 
 Let $\grid$ be a triangulation of $\Omega \subset \R^d$. 
 Let $P^L_i$ (resp. $P^R_i$), $i=1,\dots, d$, be the set of vertices that forms the edges on the left (resp. right) boundary of $\Omega$ in the $i^{\rm th}$ direction of the Cartesian coordinates. We say $\grid$ is periodic mesh if the following conditions are satisfied:
 \begin{enumerate}
 \item
For any vertex $P \in P^L_i$, there exists a dual vertex $P^* \in P^R_i$ such that $x_{P^*} -x_{P} =\ell_i \bfe_i$, where $\bfe_i$ is the $i^{\rm th}$ basis vector of the Cartesian coordinates and $\ell_i$ is the length of the domain $\Omega$ in the $i^{\rm th}-$ direction. 
\item For all $i=1,\dots,d$, the vertices $P \in P^L_i$ and their dual $P^* \in P^R_i$ are treated as the same degree of freedom. 
\end{enumerate}
\end{Definition}
\medskip

For a piecewise (elementwise) continuous function $v$ we define
\begin{align*}
v^{\rm out}(x) = \lim_{\delta \to 0+} v(x + \delta \vn),\quad
v^{\rm in}(x) = \lim_{\delta \to 0+} v(x - \delta \vn),\\
\avs{v}(x) = \frac{v^{\rm in}(x) + v^{\rm out}(x) }{2},\quad
\jump{ v }  (x)= v^{\rm out}(x) - v^{\rm in}(x)
\end{align*}
whenever $x \in \sigma \in \facesint$.  { Note that our jump operator has an opposite sign with respect to the classical discontinuous Galerkin setting \cite{DPE}. }

\noindent
\paragraph{Function spaces.}
Let ${\cal P}^{\ell}_d(K)$ be the space of polynomials of degree not greater than $\ell$ on $K$ for $d-$dimensional vector-valued functions. We introduce the following function spaces: 
\begin{align*}
\Qh & =\left\{ v\in L^1(\Omega)| v_K  \in \calP^0_1(K) \; \forall K\in \grid \right\},
\\
\bVh &=\left\{ \vv\in L^2(\Omega)| \vv_K \in \calP^1_d(K)\; \forall K\in \grid ; \intSh{\jump{\vv}} =0 \; \forall \sigma \in \faces \right\},
\\
\Xh &=  \left\{  v\in L^2(\Omega)| v_K  \in \calP^1_1(K) \; \forall K\in \grid \right\},
\end{align*}
 associated with the following projection operators 
\begin{equation*}
\Piq: \, L^1(\Omega) \rightarrow \Qh, \qquad
\Piv: \, W^{1,2}(\Omega) \rightarrow \bVh,  \qquad
\Pix: \, W^{2,2}(\Omega) \rightarrow \Xh. 
\end{equation*}

Moreover, we introduce the space
\[ 
\Wh:=  \left\{  v\in \Xh \ \middle| \ \intO{v}=0 \right\}
\]
along with the projection operator $\Piw$ constructed by Kay et al.~\cite{Kay},
enjoying the following properties, see \cite[Section 2, formula (2.20)]{Kay} 
\begin{equation} \label{crucial}
\| \Piw v - v \|_{L^2(\Omega)} \leq h \normH{ \Piw v - v } \quad \mbox{and}\quad
\normH{ \Piw v - v }   \aleq {h^{1 - \beta} \| v \|_{W^{2,2}(\Omega)}} 
\end{equation}
for any $v \in W^{2,2}(\Omega).$   Here we have introduced the broken norm
\[
\normH{v}^2 = \sum_{K\in \grid} \intK{|\Gradh v|^2} + h \intfaces{ \avs{\Gradh v}^2}  +
{\frac{1}{h^{1 + \beta}} \intfaces{\jump{v}^2}}, \quad \mbox{with } \beta>0.
\]

Note that the operator $\Piq$ can be explicitly written as  
\[\Piq  \phi  = \sum_{K \in \grid}  \frac{1_{K}(x) }{|K|} \int_K \phi \dx, \quad 1_K= \begin{cases}
 1 & \text{if }x\in K, \\
 0 & \text{otherwise.}
\end{cases}\]
We shall frequently use the notation $\widehat{\phi} =\Piq \phi$. 
Hereafter, for any $K \in \grid$ we denote: 
\[\nabla_h v|_K = \Grad  v|_K, \quad 
\Divh \vu|_K = \Div \vu|_K
\]
for any $v\in \bVh \cup \Xh$, $\vu \in \bVh$.

Further, we introduce the bilinear form 
\begin{equation*}
B(v,w) = \intO{\Gradh v \cdot \Gradh w} 
{ + }  \intfacesB{\jump{w} \vn \cdot \avs{\Gradh v}  {+}  \jump{v} \vn \cdot \avs{\Gradh w}  {+ }  
{\frac{1}{h^{1 + \beta}}} \jump{v}\jump{w}}.
\end{equation*}

It is easy to check 
\begin{equation}\label{dtB}
    B(v, v - w) 
 = \frac12  \normB{v}^2   -  \frac12 \normB{w}^2  + \frac12   \normB{v - w}^2  .
\end{equation}

Next, we introduce a norm on $\Wh$ (seminorm on $\Xh$), 
\begin{align}\label{seminorm}
    \normB{v}^2 = { \sum_{K\in \grid} \intK{|\Gradh v|^2} +\frac{1}{h^{1 + \beta}} }\intfaces{\jump{v}^2}.
\end{align}
Observe that the seminorms $\normB{\cdot}$ and $\normH{ \cdot }$ are equivalent on  $\Xh$ with a constant independent of $h$.
Consequently, by means of the Riesz representation theorem, there exists a unique $\Laph v \in \Wh$ such  that 
\begin{equation}\label{bilinear}
-  \intO{\Laph v\; w} = B(v, w) \quad \mbox{for any } w \in \Wh.
\end{equation}
Here we may replace the test function space $\Wh$ by $\Xh$ as $\Xh=\Wh \oplus {\rm span} \{1\}$ and $w\equiv 1$ satisfies \eqref{bilinear}. 

\begin{Lemma}[Closed graph lemma] \label{cL1}
Suppose that $v_h(t) \in \Xh$ for a.a. $t \in (0,T)$,
\[
\sup_{t \in (0,T) } \normB{v_h} \aleq 1,
\]
and
\[
\ v_h \to v \ \mbox{weakly in}\ L^2(0,T;L^2(\Omega)),\ 
\Laph v_h \to \widetilde{\Lap  v} \ \mbox{weakly in}\ L^2(0,T;L^2(\Omega)).
\]

Then 
\[
\widetilde{\Lap  v} = \Lap  v \ \mbox{in}\ \mathcal{D}'((0,T) \times \Omega).
\]

\end{Lemma}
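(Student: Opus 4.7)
The goal is to establish $\int_0^T \int_\Omega \widetilde{\Lap v} \, \phi \, dx \, dt = \int_0^T \int_\Omega v \, \Lap \phi \, dx \, dt$ for every $\phi \in C^\infty_c((0,T) \times \Omega)$, which is exactly the statement $\widetilde{\Lap v} = \Lap v$ in $\mathcal{D}'((0,T) \times \Omega)$. Since $\Laph v_h(t,\cdot) \in \Wh$ has zero spatial mean at every $t$ and $\bar\phi(t) := |\Omega|^{-1} \int_\Omega \phi(t,\cdot) \, dx$ is constant in $x$, replacing $\phi$ by $\phi - \bar\phi$ changes neither side of the targeted identity. I may therefore assume, without loss of generality, that $\phi(t,\cdot)$ has zero spatial mean at every $t$, so that $\Piw \phi(t,\cdot) \in \Wh$ is well-defined.

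The key algebraic identity is that for $\phi$ smooth in space, elementwise integration by parts on the torus together with $\jump{\phi}=0$ and $\avs{\Gradh \phi} = \Grad \phi$ produces a cancellation of the consistency face terms in $B$ and yields
\begin{equation*}
B(\phi, w_h) = -\int_\Omega w_h \, \Lap \phi \, dx \qquad \text{for every } w_h \in \Xh.
\end{equation*}
Setting $\phi_h := \Piw \phi$ (pointwise in $t$), invoking $\int_\Omega \Laph v_h \, \phi_h \, dx = -B(v_h, \phi_h)$ and the symmetry of $B$, I would then split
\begin{equation*}
\int_\Omega \Laph v_h \, \phi \, dx = \int_\Omega v_h \, \Lap \phi \, dx + B(\phi - \phi_h, v_h) + \int_\Omega \Laph v_h \, (\phi - \phi_h) \, dx.
\end{equation*}

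The next step is to verify that the two remainder terms vanish in the limit after time integration. Using the stated interpolation estimates $\|\phi - \phi_h\|_{L^2(\Omega)} \leq h \normH{\phi - \phi_h}$ and $\normH{\phi - \phi_h} \lesssim h^{1-\beta} \|\phi\|_{W^{2,2}}$, together with the $L^2_t L^2_x$ boundedness of $\Laph v_h$ inherited from weak convergence and the equivalence $\normH{v_h} \sim \normB{v_h} \lesssim 1$ on $\Xh$, the last term is of order $h^{2-\beta}$ and the middle term of order $h^{1-\beta}$. Passing $h \to 0$ using the weak convergences $v_h \rightharpoonup v$ and $\Laph v_h \rightharpoonup \widetilde{\Lap v}$ in $L^2_t L^2_x$ then delivers the distributional identity.

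The most delicate point will be the continuity estimate $|B(\psi, w_h)| \lesssim \normH{\psi} \, \normH{w_h}$ applied with $\psi = \phi - \phi_h \notin \Xh$. Cauchy--Schwarz handles the volume and stabilization terms at once, but the two mixed face contributions require pairing a jump carrying the $h^{-(1+\beta)}$ weight against an average-gradient carrying the $h$ weight; this loses a factor $h^{-\beta/2}$ that is harmlessly absorbed by the $h^{1-\beta}$ decay of the interpolation error as long as $\beta < 1$. Once this continuity bound is secured, every error term decays at an algebraic rate and the passage to the limit is routine.
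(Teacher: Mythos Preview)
Your proposal is correct and follows essentially the same route as the paper: reduce to zero-mean test functions, insert the projector $\Piw$, use the identity $B(v_h,w)=-\int_\Omega v_h\,\Lap w$ for smooth $w$, and kill the two remainders via the interpolation bounds \eqref{crucial} together with the continuity $|B(\cdot,\cdot)|\lesssim \normH{\cdot}\,\normH{\cdot}$ and the equivalence $\normH{v_h}\approx\normB{v_h}$ on $\Xh$. One small slip: the mixed face term actually \emph{gains} a factor $h^{\beta/2}$ rather than losing $h^{-\beta/2}$, so the continuity bound holds uniformly for any $\beta>0$ and your extra restriction $\beta<1$ is unnecessary; this does not affect the argument.
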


\begin{proof}
Our goal is to show 
\[
- \int_0^T \intO{ \widetilde{ \Lap  v} w } \dt = 
-  \int_0^T \lim_{h \to 0} \intO{ \Laph v_h w } \dt = - \int_0^T \intO{ v \Lap  w } \dt
\]
for any $w \in L^2(0,T; W^{2,2}(\Omega))$. Without loss of generality, we may assume $\intO{ w } = 0$ for a.a. $t$.

We have 
\[
- \int_0^T \intO{ \Laph v_h w } \dt = - \int_0^T \intO{ \Laph v_h (w - \Piw w) } \dt - \int_0^T \intO{ \Laph v_h \Piw w } \dt,   
\]
where, by virtue of \eqref{crucial}, 
\[
\int_0^T \intO{ \Laph v_h (w - \Piw w) } \dt \to 0 \ \mbox{as}\ h 
{   \to 0}.
\]

Next, in accordance with \eqref{bilinear},
\[ 
- \intO{ \Laph v_h \Piw w } = B (v_h, \Piw w) = B(v_h, \Piw w - w) + B(v_h, w), 
\]
where, by direct manipulation,

\[
B(v_h, w) = - \intO{ v_h \Lap  w }\ \mbox{and, in particular,}\ \int_0^T B(v_h, w) \dt \to - \int_0^T \intO{ v \Lap  w } 
\ \mbox{as}\ h \to 0.
\]
Thus it is enough to show 
\[
\int_0^T B(v_h, \Piw w - w) \dt \to 0 \ \mbox{as}\ h \to 0.
\]
It follows from Cauchy--Schwartz inequality that 
\[
|B(v_h, \Piw w - w)| \aleq \|| v_h \|| \ \|| \Piw w - w \||.
\]
As $v_h \in \Xh$, we have 
\[
\|| v_h \|| \aleq \normB{v_h} 
\]
and the desired conclusion follows from \eqref{crucial}.

\end{proof}

\begin{Lemma} [Compactness Lemma] \label{cL2}

Suppose that $v_h(t) \in \Xh$ for a.a. $t \in (0,T)$, 
\[
v_h(t) \to v \ \mbox{(strongly) in}\ L^2(0,T; L^2(\Omega)),\ 
\Laph v_h(t) \to \Lap  v \ \mbox{weakly in}\ L^2(0,T; L^2(\Omega)), 
\]
and
\[
\nabla_h v_h \to \Grad v \ \mbox{weakly in}\ L^2(0,T; L^2(\Omega; \R^d)).
\]

Then
\[
\nabla_h v_h \to \Grad v \ \mbox{(strongly) in}\ L^2(0,T; L^2(\Omega; \R^d)).
\]

\end{Lemma}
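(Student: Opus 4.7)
The plan is to upgrade weak to strong convergence by establishing norm convergence $\|\Gradh v_h\|_{L^2(0,T;L^2)} \to \|\Grad v\|_{L^2(0,T;L^2)}$, since in a Hilbert space weak convergence together with norm convergence forces strong convergence. The engine of the argument is the identity obtained by testing \eqref{bilinear} with $w = v_h$, admissible because the test space extends from $\Wh$ to all of $\Xh$: this reads
\[
- \intO{ \Laph v_h \, v_h } = B(v_h, v_h).
\]
Integrating in time, the product of the weak $L^2$ convergence of $\Laph v_h$ with the strong $L^2$ convergence of $v_h$ sends the left-hand side to $-\int_0^T \intO{ \Lap v \cdot v } \dt = \int_0^T \|\Grad v\|_{L^2}^2 \dt$; the integration by parts on the torus is legitimate because the stated weak limits place $v \in H^1(\mathcal{T}^d)$ with $\Lap v \in L^2$.

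Expanding $B(v_h, v_h) = \|\Gradh v_h\|_{L^2}^2 + 2 C_h + b_h$ with
\[
C_h = \intfaces{ \jump{v_h}\vn \cdot \avs{\Gradh v_h} }, \qquad b_h = \frac{1}{h^{1+\beta}} \intfaces{ \jump{v_h}^2 },
\]
I would then invoke the coercivity $B(v_h, v_h) \ageq \normB{v_h}^2$, standard for interior penalty DG with super-unit penalty exponent, to conclude from the preceding limit that $\int_0^T \normB{v_h}^2 \dt$ is uniformly bounded. The cross term is next controlled by Cauchy--Schwarz combined with the polynomial trace inequality $\int_\sigma |\avs{\Gradh v_h}|^2 \aleq h^{-1}\int_K |\Gradh v_h|^2$, which produces $|C_h| \aleq h^{\beta/2} \normB{v_h}^2$ pointwise in time and hence $\int_0^T |C_h| \dt \to 0$ as $h \to 0$.

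Passing to the limit in $\int_0^T \|\Gradh v_h\|_{L^2}^2 \dt + 2 \int_0^T C_h \dt + \int_0^T b_h \dt \to \int_0^T \|\Grad v\|_{L^2}^2 \dt$, the cross term drops. Since $b_h \geq 0$ and the weak lower semicontinuity associated with $\Gradh v_h \to \Grad v$ weakly gives $\liminf \int_0^T \|\Gradh v_h\|_{L^2}^2 \dt \geq \int_0^T \|\Grad v\|_{L^2}^2 \dt$, both the penalty integral and the norm gap are forced to vanish. Combining the resulting norm convergence with the given weak convergence in the Hilbert space concludes the proof.

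I expect the main difficulty to lie in the cross term: mere boundedness would not close the argument, and it is essential to extract a positive power of $h$ from the trace inequality. This gain is available precisely because the penalty exponent $1+\beta$ strictly exceeds $1$; at the borderline $\beta = 0$ one would only see $C_h = O(\normB{v_h}^2)$, which would block the limit step. A secondary point requiring care is the integration-by-parts pairing $-\int \Lap v \cdot v = \|\Grad v\|_{L^2}^2$ on $\mathcal{T}^d$, which follows by density once one notes that $\Lap v \in L^2$ together with $v \in L^2$ places $v$ in $H^2(\mathcal{T}^d)$ by elliptic regularity.
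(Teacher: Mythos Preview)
Your proof is correct and follows essentially the same approach as the paper: both arguments rest on the identity $-\intO{\Laph v_h\, v_h} = B(v_h,v_h)$, pass to the limit on the left via strong/weak pairing, and control the cross term $C_h$ through the trace inequality, exploiting $\beta>0$. The only difference is packaging: the paper bounds $|2C_h| \leq b_h + c\,h^\beta \|\Gradh v_h\|_{L^2}^2$ so that the penalty term $b_h$ cancels exactly against the $-b_h$ in the expansion, yielding $(1-ch^\beta)\|\Gradh v_h\|_{L^2}^2 \leq -\intO{\Laph v_h v_h}$ directly, whereas you first invoke coercivity of $B$ to bound $\int_0^T \normB{v_h}^2\,\dt$ and then use $|C_h|\aleq h^{\beta/2}\normB{v_h}^2$ to send the cross term to zero separately; both routes give the same $\limsup$ inequality.
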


\begin{proof}
In view of the weak lower semi--continuity and convexity of the $L^2$-norm, it is enough to show 
\[
\limsup_{h \to 0} \int_0^T \intO{ |\nabla_h v_h |^2 } \dt \leq \int_0^T \intO{ |\Grad v|^2 } \dt. 
\]
To see this, we {write 
\[
\intO{ |\nabla_h v_h |^2 }  = - \intO{ \Laph  v_h v_h } 
- \intfacesB{2 \jump{v_h} \vn \cdot \avs{\Gradh v_h}   + 
\frac{1}{h^{1 + \beta}} \jump{v_h}^2}.\]
}
On one hand, thanks to our hypotheses, 
\[
- \int_0^T \intO{ \Laph  v_h v_h } \dt \to - \int_0^T \intO{ \Lap  v v } \dt =  \int_0^T \intO{ |\Grad v|^2 } \dt. 
\]
{On the other hand,
\[
\left| \intfaces{2 \jump{v_h} \vn \cdot \avs{\Gradh v_h} } \right| \leq \intfaces{\frac{1}{h^{1 + \beta}} \jump{v_h}^2 } 
+ c h^\beta \intO{ |\nabla_h v_h |^2 }. 
\]
As $\beta > 0$, we get the desired conclusion.}
\end{proof}

\paragraph{Diffusive upwind flux.}
Given the velocity field $\vv \in \bVh$, the upwind flux for any function $r\in \Qh$ is specified at each face  $\sigma \in \faces$ by
\begin{align*}\label{Up}
\Up [r, \vv]|_\sigma   =r^{\up} \vs \cdot \vn
=r^{\rm in} [ \vs \cdot \vn]^+ + r^{\rm out} [ \vs \cdot \vn]^-
= \avs{r} \ \vs \cdot \vn - \frac{1}{2} |\vs \cdot \vn| \jump{r},
\end{align*}
where
\begin{equation*}
\vs = \frac{1}{|\sigma|} \intSh{\vv}, \quad 
[f]^{\pm} = \frac{f \pm |f| }{2} \quad \mbox{and} \quad
r^{\up} =
\begin{cases}
 r^{\rm in} & \mbox{if} \ \vs \cdot \vn \geq 0, \\
r^{\rm out} & \mbox{if} \ \vs \cdot \vn < 0.
\end{cases}
\end{equation*}
Furthermore, we consider a diffusive numerical flux function of the following form
\begin{align}\label{flux_F}
\Fup(r,\vv)
=\Up[r, \vv] - h^{\eps} \jump{ r }, \, \eps>0. 
\end{align}

When ${\bf r}$ is a vector function, e.g. ${\bf r}= \vr \vu$ in the momentum equation, we write the above numerical flux as 
\[ \bFup(\vr \vu, \vv) \equiv \big( \Fup(\vr u_1, \vv), \ldots, \Fup (\vr u_d, \vv)  \big)^T \mbox{ and } {\bUp}(\vr \vu, \vv) \equiv \big( \Up(\vr u_1, \vv), \ldots, \Up (\vr u_d, \vv)  \big)^T.\]


\paragraph{Time discretization.}
For a given time step $\TS \approx h >0$,
we denote the approximation of a function $v_h$ at time $t^k= k\TS$ by $v_h^k$ for $k=1,\ldots,N_T(=T/\TS)$.  
Then, we introduce the piecewise constant extension of discrete values,
\begin{equation}\label{TD}
 v_h(t) =\sum_{k=1}^{N_T} v_h^k 1_{I^k} 
\mbox{ with } I^k= ((k-1)\TS,k\TS].
\end{equation}
Furthermore, we approximate the time derivative by the backward Euler method
\[D_t v_h(t)
 =\frac{v_h(t) - v_h(t - \TS )}{\TS} \; \forall \; t \in(0,T], i.e., \quad 
 D_t v_h^k = \frac{v_h^k - v_h^{k-1} }{\TS} \mbox{ for all } k=1,\dots, N_T.
\]

\paragraph{Useful estimates.}
We recall some basic inequalities used in the numerical analysis. First, thanks to Taylor's theorem, it is obvious for $\phi, \bfphi \in C^2(\Omega)$ that  
\begin{equation}\label{n4c2}\begin{split}
 \norm{ \phi -  \Piq  \phi  }_{L^p} \aleq  h^s \norm{ \phi }_{C^s}, \>
\norm{  \bfphi -   \Piv \bfphi   }_{L^p}  \aleq h^s\norm{ \phi }_{C^s}, \\
 \norm{ \phi -  \Pix  \phi  }_{L^p} \aleq  h^s \norm{ \phi }_{C^s}, \ 1< p \leq \infty , \>
  \mbox{ for } s=1,2.
\end{split}\end{equation}
 Next, we report a discrete analogous of the Poincar\' e--Sobolev type inequality (see \cite[Theorem 17]{FeLMMiSh} for a similar result):
\begin{Lemma}[Sobolev inequality]\label{lm_SP}
 Let  $r \geq 0$ be a function defined on $\Omega \subset \R^d$  such that
\[ 0< c_M \leq \intO{r} , \mbox{ and } \intO{r^\gamma}\leq  c_E \mbox{ for } \gamma>1,
\]
where $c_M$ and $c_E$ are some positive constants.
Then the following Poincar{\'e}-Sobolev type inequality holds true
\begin{equation}\label{SP}
\norm{ v_h }_{L^q(\Omega)}^2 
\aleq  c \norm{\Gradh v_h }_{L^2(\Omega)}^2  
+c \intO{r|\Pi_a v_h |^2}
\end{equation}
for any $v_h \in \bVh\cup \Xh$, and $1 \leq q \leq 6$ for $d=3$, $1\leq q<\infty$ for $d=2$,
where the constant $c$ depends on $c_M$ and $c_E$ but not on the mesh parameter and $\Pi_a \in \{1, \Piq \}$.  
In particular, setting $r=1$ yields
\begin{equation}\label{SP2}
    \norm{ v_h }_{L^q(\Omega)}^2 
\aleq  \norm{\Gradh v_h }_{L^2(\Omega)}^2  
+  \norm{ v_h }_{L^2(\Omega)}^2 .
\end{equation}
\end{Lemma}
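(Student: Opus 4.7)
The plan is to reduce the estimate to a discrete Poincar\'e--Sobolev inequality for mean--zero functions, and then to recover control of the mean from the weight $r$. Concretely, I would decompose
\[
v_h = \langle v_h \rangle + \widetilde{v}_h, \qquad \langle v_h \rangle = \frac{1}{|\Omega|} \intO{v_h},\qquad \intO{\widetilde v_h}=0,
\]
so that $\norm{v_h}_{L^q} \le |\langle v_h\rangle|\,|\Omega|^{1/q} + \norm{\widetilde v_h}_{L^q}$, and then treat each piece separately.

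For the mean--zero piece $\widetilde v_h \in \bVh \cup \Xh$, I would invoke a broken Poincar\'e--Sobolev estimate
\[
\norm{\widetilde v_h}_{L^q(\Omega)}^2 \aleq \norm{\Gradh \widetilde v_h}_{L^2(\Omega)}^2,
\]
valid for $1 \le q \le 6$ if $d=3$ and any $q < \infty$ if $d=2$. This is by now a standard consequence of broken Sobolev embeddings, once jump contributions are controlled via the zero--mean--jump constraint built into $\bVh$, or via the jump penalty in $\normB{\cdot}$ on $\Xh$; using $\Gradh \widetilde v_h = \Gradh v_h$ handles the zero--mean part of the conclusion.

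The main step is recovering the mean. Since both admissible choices $\Pi_a \in \{1,\Piq\}$ preserve constants, $\Pi_a v_h = \langle v_h\rangle + \Pi_a \widetilde v_h$, and an elementary application of Young's inequality gives
\[
\intO{r|\Pi_a v_h|^2} \;\ge\; \tfrac{1}{2}\langle v_h\rangle^2 \intO{r} \;-\; \intO{r|\Pi_a \widetilde v_h|^2}.
\]
The lower bound $\intO{r}\ge c_M$ then yields
\[
\langle v_h\rangle^2 \aleq \intO{r|\Pi_a v_h|^2} + \intO{r|\Pi_a \widetilde v_h|^2},
\]
and the remaining cross term is absorbed by H\"older with exponents $\gamma$, $\gamma' = \gamma/(\gamma-1)$, together with $L^p$--stability of $\Pi_a$ and the Sobolev estimate from the previous paragraph:
\[
\intO{r|\Pi_a \widetilde v_h|^2} \;\le\; \norm{r}_{L^\gamma}\,\norm{\Pi_a \widetilde v_h}_{L^{2\gamma'}}^2 \;\aleq\; c_E^{1/\gamma}\,\norm{\Gradh v_h}_{L^2}^2.
\]
The main obstacle is the admissibility of the exponent $2\gamma'$ in the broken Sobolev embedding: this forces $2\gamma'\le q_{\max}$, i.e.\ $\gamma \ge 3/2$ when $d=3$, which is furnished by the strengthened pressure hypothesis \eqref{H3bis} in the concrete application, and is automatic in $d=2$ where every finite exponent is allowed.

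Combining the bound on $\langle v_h\rangle$ with the Poincar\'e--Sobolev estimate on $\widetilde v_h$ produces \eqref{SP}. The special case \eqref{SP2} is immediate on taking $r \equiv 1$ (so $c_M = c_E = |\Omega|$), noting that $\intO{|\Pi_a v_h|^2}$ is dominated by $\norm{v_h}_{L^2}^2$ by the $L^2$--stability of $\Piq$ (and trivially when $\Pi_a = 1$).
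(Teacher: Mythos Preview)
The paper does not actually prove this lemma; it is stated with a pointer to \cite[Theorem~17]{FeLMMiSh} for a similar result, so there is no in--paper argument to compare against. Your sketch is the standard route to such weighted Poincar\'e--Sobolev estimates: split off the mean, control the oscillatory part by a broken Sobolev inequality, and recover the mean from the weighted $L^2$ term using the mass lower bound on $r$.

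Two of your caveats are worth keeping explicit. First, for $v_h\in\Xh$ the inequality \eqref{SP} as literally written (only $\norm{\Gradh v_h}_{L^2}^2$ on the right) is not true: a piecewise constant $v_h$ supported on a single element where $r$ vanishes gives a counterexample. You are right that jump control via $\normB{\cdot}$ is what makes the broken Sobolev step go through on $\Xh$, and indeed the paper uses the lemma for $\cch$ only in the form $\norm{\cch}_{L^p}\aleq \normB{\cch}+\norm{\cch}_{L^2}$ (see \eqref{est4}), so in practice the jump term is present. Second, your H\"older step $\intO{r|\Pi_a\widetilde v_h|^2}\le \norm{r}_{L^\gamma}\norm{\Pi_a\widetilde v_h}_{L^{2\gamma'}}^2$ needs $2\gamma'\le 6$ in $d=3$, i.e.\ $\gamma\ge 3/2$; you correctly flag this and note it is covered by the pressure hypothesis used later in the paper. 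If one wanted the full range $\gamma>1$ stated in the lemma, a slightly sharper argument is needed (e.g.\ truncating $r$ at a level $M$ chosen from $c_M,c_E$ so that the large part carries little mass, and handling the bounded part in $L^\infty$), but for the purposes of this paper your restriction is harmless.
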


The following lemma shall be useful in the analysis of the energy stability. 
\begin{Lemma}\label{lm_flux}
For any $\vrh \in \Qh$ and $\vuh \in \bVh$ it holds
\begin{equation}\label{Flux}\begin{split}
&\intfacesB{ \bFup(\vrh \auh, \vuh) \cdot \jump{\auh}  -   \bFup(\vrh, \vuh) \jump{\frac12|\auh|^2} }
\\ & = -\frac12 \intfaces{ \vrh^{\up} \abs{ \jump{\auh}}^2  \abs{\us \cdot \vn}}
 - h^\eps \intfaces{  \avs{\vrh} \abs{\jump{\auh}}^2}, 
\end{split}\end{equation}
where we have denoted $\us=\frac{1}{|\sigma|}\intSh{\vuh}$.
\end{Lemma}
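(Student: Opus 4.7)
The plan is to work face-by-face and split every numerical flux into its upwind and diffusive parts, $\bFup(\cdot,\vuh) = \bUp(\cdot,\vuh) - h^\eps \jump{\cdot}$, so that the identity \eqref{Flux} reduces to two algebraic claims at each $\sigma = K|L$: one for the pure upwind contribution and one for the artificial diffusion. Since $\vrh \in \Qh$ is scalar and piecewise constant, the componentwise definition of $\bUp$ gives $\bUp(\vrh \auh, \vuh) = \vrh^{\up} \auh^{\up} (\us \cdot \vn)$ and $\Up(\vrh, \vuh) = \vrh^{\up} (\us \cdot \vn)$, which lets me factor $\vrh^{\up}(\us\cdot\vn)$ out of the upwind part of the difference.

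For the upwind contribution I would use the standard identity $\auh^{\up} = \avs{\auh} - \tfrac12 \,\mathrm{sign}(\us\cdot\vn) \jump{\auh}$ together with the discrete chain rule $\jump{\tfrac12|\auh|^2} = \avs{\auh}\cdot\jump{\auh}$, which follows from $|a|^2 - |b|^2 = (a+b)\cdot(a-b)$. Substituting gives
\[
\bUp(\vrh\auh,\vuh)\cdot\jump{\auh} - \Up(\vrh,\vuh) \jump{\tfrac12|\auh|^2}
= \vrh^{\up}(\us\cdot\vn)\bigl(\auh^{\up} - \avs{\auh}\bigr)\cdot\jump{\auh}
= -\tfrac12\,\vrh^{\up}|\us\cdot\vn|\,|\jump{\auh}|^2,
\]
which is exactly the first term on the right of \eqref{Flux}.

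For the diffusive contribution I would apply the product-rule for jumps, $\jump{\vrh\auh} = \avs{\vrh}\jump{\auh} + \jump{\vrh}\avs{\auh}$, together with the same chain rule $\jump{\tfrac12|\auh|^2} = \avs{\auh}\cdot\jump{\auh}$. Then
\[
\jump{\vrh\auh}\cdot\jump{\auh} - \jump{\vrh}\jump{\tfrac12|\auh|^2}
= \avs{\vrh}\,|\jump{\auh}|^2 + \jump{\vrh}\avs{\auh}\cdot\jump{\auh} - \jump{\vrh}\avs{\auh}\cdot\jump{\auh}
= \avs{\vrh}\,|\jump{\auh}|^2,
\]
so the diffusive part contributes $-h^{\eps}\avs{\vrh}\,|\jump{\auh}|^2$. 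Summing both identities over $\sigma \in \faces$ (with integration against $\ds$) yields \eqref{Flux}.

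There is no real obstacle here: the lemma is a pointwise algebraic identity on each face and the only things to be careful about are (i) that $\vrh$ being scalar and piecewise constant lets the upwind value pull out of the vector flux, and (ii) correctly using the two elementary jump identities. No functional-analytic tools, and no regularity of $\vuh$ beyond being in $\bVh$, are needed.
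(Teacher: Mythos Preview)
Your proof is correct and follows essentially the same route as the paper's: both split the flux into its upwind and diffusive parts, use the discrete chain rule $\jump{\tfrac12|\auh|^2}=\avs{\auh}\cdot\jump{\auh}$, and reduce each piece to an algebraic identity on a single face. The only cosmetic difference is that the paper treats the upwind part by separating $[\us\cdot\vn]^+$ and $[\us\cdot\vn]^-$, whereas you encode the same case distinction via $\auh^{\up}=\avs{\auh}-\tfrac12\,\mathrm{sign}(\us\cdot\vn)\jump{\auh}$.
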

The proof is analogous to \cite[Lemma 8.1]{FeLMMiSh}. For completeness, we attach the proof in Appendix~\ref{Sec_Ap_flux}.

\subsection{A mixed discontinuous Galerkin -- finite element method}\label{secSKM}
Now we are ready to introduce a combined discontinuous Galerkin (DG) -- finite element (FE) method for the approximation of Navier--Stokes--Allen--Cahn system~\eqref{i1}--\eqref{i2} with the periodic boundary conditions \eqref{i3}. 
\begin{Definition}[DG-FE method]
Let $(\vrh^0,\vuh^0, \cch^0) =(\Piq\vr_0, \Piv \vu_0, \Pix \cc_0)$ be the initial data. We say $(\vrh, \vuh,\cch) = \sum\limits_{k=1}^{N_T}(\vrh^k,\vuh^k, \cch^k)1_{I^k}$ is a DG-FE approximation of the Navier--Stokes--Allen--Cahn system~\eqref{i1}--\eqref{i3} if the triple $(\vrh^k,\vuh^k, \cch^k) \in \Qh \times \bVh \times \Xh$ satisfies
the following system of algebraic equations with the space periodic boundary conditions for all $k=1,\dots, N_T$: 
\begin{subequations}\label{scheme}
\begin{equation}\label{schemeD}
\intO{ D_t \vrh^k \phi_h } -  \intfaces{  \Fup(\vrh^k,\vuh^k) \jump{\phi_h}   } = 0, \quad \mbox{for all } \phi_h \in \Qh; 
\end{equation}
\begin{equation}
\begin{split}\label{schemeM}
&\intO{ D_t  (\vrh^k \auhk) \cdot \bfphi_h } -  \intfaces{ \bFup(\vrh^k \auhk,\vuh^k)
\cdot \jump{\widehat{ \bfphi_h} }   }
  +  \nu \intO{ \Gradh \vuh^k: \Gradh \bfphi_h }\\&
+   \eta \intO{\Divh \vuh^k \Divh \bfphi_h}
=  \intO{ p_h^k  \Divh \bfphi_h  } 
+ \intO{ (f_h^k - \Laph \cch^k) \Gradh \cch^{k} \cdot  \bfphi_h}
 ,
\quad \mbox{for all }
\bfphi_h \in \bVh; 
\end{split}
\end{equation}
\begin{equation}\label{schemeC}
 \intO{ (D_t \cch^k  + \vuh^k \cdot \Gradh \cch^k) \psi_h }=  \intO{ \big(\Laph \cch^k -  \Fc_h^k  \big)   \psi_h   },  \quad \mbox{for all } \psi_h \in \Xh;
\end{equation}
where $\Laph \cch^k  \in \Wh$ is defined according to \eqref{bilinear}, 
\begin{equation}\label{schemeLap}
- \intO{\Laph \cch^k\; \varphi_h} = B(\cch^k,\varphi_h) \quad \mbox{for all } \varphi_h \in \Wh.
\end{equation}
\end{subequations}
Here, $p^k_h=p(\rho^k_h)$, $ \eta=\frac{d-2}{d}\nu + \lambda >0$ and
\begin{equation}\label{f_split}
\Fc_h^k =
\begin{cases}
 2(\cch^k + 1)  & \text{if } \cch^k \in(-\infty, -1) ,\\
(\cch^k)^3 - \cch^{k-1} & \text{if } \cch^k \in [-1,1]. \\
2( \cch^k - 1)  & \text{if } \cch^k \in (1, \infty).
 \end{cases}
\end{equation}
\end{Definition}

\begin{Remark}
Note that $\Fc_h^k$ is an approximation of $\Fc \equiv F'(\cc)$ at time $t^k$. 
The idea in defining $\Fc_h$ is that we split the convex and concave parts of $F(\cch)$ and respectively approximate them implicitly and explicitly in time. Such kind of splitting shall be helpful in deriving the energy stability, see the proof of Theorem~\ref{thm_sta} below. 
\end{Remark}

\subsection{Stability}\label{sec_St}

 In this section we show the stability of the DG-FE method, including the positivity of density, conservation of mass  and  the energy stability. 

\subsubsection{Basic properties of the DG-FE scheme}
Before stating the stability, let us show some fundamental properties of the DG-FE scheme \eqref{scheme}. 
\paragraph{Conservation of mass.}
Setting  $\phi_h =1$ in the density method \eqref{schemeD} we get 
$\intO{D_t \vrh^k} =0$, 
which implies for all $k=1,2,\ldots, N_T$ that 
\[ \intO{\vrh^k} = \intO{\vrh^{k-1}} =  \dots =\intO{\vrh^0} = \intO{\Piq \vr_0}= \intO{\vr_0}. 
\]
\paragraph{Internal energy balance.}
We recall the  discrete internal energy balance from~\cite[Section 4.1]{FeKaNo} or \cite[Lemma 3.1]{HS_MAC}. Indeed, testing the  scheme for the density \eqref{schemeD} by $\phi_h=\Hc'(\vrh^k)$ gives rise to the following lemma.
\begin{Lemma}[Discrete internal energy balance]\label{lem_r1}
Let $(\vrh^k,\vuh^k)\in  \Qh \times \bVh$ satisfy the discrete continuity equation \eqref{schemeD} for any $k\in\{1,\dots,N_T\}$. Then, there exist $\xi \in \co{\vrh^{k-1}}{\vrh^k}$ and  $\zeta \in \co{\vr_K^k}{\vr_L^k}$ for any $\sigma=K|L \in \facesint$  such that 
\begin{equation}\label{r1}
\begin{split}
& \intO{ D_t \Hc(\vrh^k)  }
+  \intO{ p(\vrh^k) \Divh \vuh^k    }
\\ &=
- \frac{\TS}2 \intO{ \Hc''(\xi)|D_t \vrh^k|^2  }
-  \intfaces{ \Hc''(\zeta) \jump{  \vrh^k } ^2 \left( h^\eps  + \frac12 |\us^k \cdot \vn | \right)} 
\leq 0.
\end{split}
\end{equation}
\end{Lemma}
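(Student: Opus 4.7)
The plan is to test the discrete continuity equation~\eqref{schemeD} with $\phi_h = \Hc'(\vrh^k)$, which is admissible because $\vrh^k \in \Qh$ is piecewise constant, hence so is $\Hc'(\vrh^k)$. One then combines a pointwise Taylor expansion on cells (to produce the time-derivative dissipation) with a careful face-wise reformulation of the upwind flux (to generate both the elastic pressure $\intO{p(\vrh^k)\Divh\vuh^k}$ and the face dissipation).

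For the temporal term, Taylor's formula applied pointwise gives
\[
\Hc'(\vrh^k)\,D_t\vrh^k = D_t \Hc(\vrh^k) + \frac{\TS}{2}\Hc''(\xi)\,|D_t\vrh^k|^2,\quad \xi \in \co{\vrh^{k-1}}{\vrh^k}.
\]
For the elastic pressure, since $\vrh^k$ is piecewise constant and $\vuh^k \in \bVh$ satisfies $\intSh{\jump{\vuh^k}} = 0$, the divergence theorem applied cell by cell yields
\[
\intO{p(\vrh^k)\Divh \vuh^k} = -\intfaces{\jump{p(\vrh^k)}\,\us^k \cdot \vn}.
\]
Combined with the tested continuity equation, \eqref{r1} then reduces to the face identity
\[
\intfaces{\Fup(\vrh^k,\vuh^k)\jump{\Hc'(\vrh^k)}} - \intfaces{\jump{p(\vrh^k)}\us^k \cdot \vn} = -\intfaces{\Hc''(\zeta)\jump{\vrh^k}^2 \left(h^\eps + \tfrac{1}{2}|\us^k\cdot\vn|\right)}.
\]

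This identity can be proved face by face. Splitting $\Fup = \vrh^{k,\up}\,\us^k\cdot\vn - h^\eps \jump{\vrh^k}$, the diffusive piece contributes $-h^\eps \Hc''(\zeta_1)\jump{\vrh^k}^2$ directly via $\jump{\Hc'(\vrh^k)} = \Hc''(\zeta_1)\jump{\vrh^k}$. For the upwind piece on $\sigma = K|L$ with $\vn$ pointing from $K$ to $L$, assume first $\us^k\cdot\vn \geq 0$ so that the upwind value is $\vr_K^k$. The identity $p = \vr\Hc' - \Hc$ permits the rewriting
\[
\vr_K^k(\Hc'(\vr_L^k)-\Hc'(\vr_K^k)) - (p(\vr_L^k)-p(\vr_K^k)) = -\int_{\vr_K^k}^{\vr_L^k}(\Hc'(\vr_L^k)-\Hc'(s))\,\D s = -\tfrac{1}{2}\Hc''(\zeta_2)\jump{\vrh^k}^2
\]
by two applications of the mean value theorem. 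The case $\us^k\cdot\vn < 0$ is symmetric (upwind value $\vr_L^k$), and in both situations multiplication by $\us^k\cdot\vn$ produces the sign-definite expression $-\tfrac{1}{2}|\us^k\cdot\vn|\,\Hc''(\zeta_2)\jump{\vrh^k}^2$. A single face-wise $\zeta \in \co{\vr_K^k}{\vr_L^k}$ is then recovered from $\zeta_1,\zeta_2$ via the intermediate value theorem applied to the continuous function $\Hc''$.

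The main obstacle is precisely this upwind/pressure cancellation. A naive central-antidiffusive splitting of $\Fup$ using $\vrh^{k,\up} = \avs{\vrh^k} - \tfrac{1}{2}\mathrm{sign}(\us^k\cdot\vn)\jump{\vrh^k}$ would leave the residue $\jump{p(\vrh^k)} - \avs{\vrh^k}\jump{\Hc'(\vrh^k)}$, a trapezoidal-rule error whose sign is not controlled. It is only by working directly with the upwind value that the remainder becomes a one-sided integral of $\Hc''$ over the interval between $\vr_K^k$ and $\vr_L^k$, hence sign-definite and compatible with the upwind numerical dissipation. Non-positivity of the right-hand side of \eqref{r1} then follows from $\Hc'' = p'/\vr > 0$ on $(0,\infty)$ under \eqref{H3bis}.
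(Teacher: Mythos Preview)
Your proof is correct and is precisely the argument the paper indicates: the paper does not give a self-contained proof but simply states that the result follows by testing \eqref{schemeD} with $\phi_h=\Hc'(\vrh^k)$ and refers to \cite[Section 4.1]{FeKaNo} and \cite[Lemma 3.1]{HS_MAC} for the details, which are exactly the Taylor expansion in time and the face-wise upwind/pressure cancellation you carry out. Your observation that two intermediate points $\zeta_1,\zeta_2$ arise and must be merged into a single $\zeta$ via the intermediate value theorem (using continuity of $\Hc''$ from \eqref{H3bis} and the convex-combination structure of the coefficient $h^\eps+\tfrac12|\us^k\cdot\vn|$) is a nice clarification of a point the cited references often leave implicit.
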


\begin{Lemma}[Existence of a solution and positivity of density]\label{lem_exist}
Given $ \vr_0>0$.  For every $k=1, \dots, N_T$ there exists a solution $(\vrh^k,\vuh^k, \cch^k)\in \Qh \times \bVh \times \Xh$ to the DG-FE scheme \eqref{scheme}. 
Moreover, any solution to \eqref{scheme} preserves the positivity of the density, i.e.  $\vrh^k > 0$ for any $k=1, \dots, N_T$.
\end{Lemma}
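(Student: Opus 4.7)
I would split the statement into the linear continuity subproblem, where positivity and unique solvability follow from an $M$-matrix argument, and the coupled nonlinear system for $(\vuh^k,\cch^k)$, where existence is obtained by a topological degree argument that leverages the preserved positivity of $\vrh^k$.

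\textbf{Positivity and continuity subproblem.} Fix $\vuh^k \in \bVh$. Testing \eqref{schemeD} with $\phi_h = 1_K$ and inserting the explicit form of $\Fup$, I obtain for every $K \in \grid$
\[
\frac{|K|}{\TS}\vr_K^k + \sum_{\sigma=K|L \in \faces(K)} |\sigma| \left( [\us^k \cdot \vn_K]^+ + h^\eps \right) \vr_K^k + \sum_{\sigma=K|L} |\sigma| \left( [\us^k \cdot \vn_K]^- - h^\eps \right) \vr_L^k = \frac{|K|}{\TS}\vr_K^{k-1}.
\]
This is a linear system $A(\vuh^k)\vec{\vrh}^k = \vec{\vrh}^{k-1}/\TS$ (up to rescaling). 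By inspection $A$ is a Z-matrix (non-positive off-diagonals), and a short computation using $\vn_K = -\vn_L$ together with the identity $[a]^- = -[-a]^+$ shows that its column sums all equal $|K|/\TS > 0$. Hence $A^T$ is strictly diagonally dominant, $A$ is a non-singular $M$-matrix with $A^{-1} \geq 0$ entry-wise, and $\vrh^{k-1} > 0$ forces $\vrh^k > 0$. In particular $\vuh^k \mapsto \vrh^k[\vuh^k] = A(\vuh^k)^{-1}\vec{\vrh}^{k-1}/\TS$ is a well-defined, continuous, positivity-preserving map.

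\textbf{Existence via topological degree.} Substituting $\vrh^k[\vuh^k]$ into \eqref{schemeM}--\eqref{schemeC} reduces the scheme to a continuous nonlinear map $\mathcal{F} : \bVh \times \Xh \to \bVh \times \Xh$ whose zeros are the sought solutions. I would apply a Leray--Schauder topological degree argument on a large ball $B_R \subset \bVh \times \Xh$: construct a continuous homotopy $H(\lambda, \cdot)$, $\lambda \in [0,1]$, joining $\mathcal{F}$ at $\lambda = 1$ to a linear operator with non-vanishing degree at $\lambda = 0$ (e.g.\ identity plus a dissipative block), and derive a uniform-in-$\lambda$ a priori bound on any zero of $H(\lambda, \cdot)$. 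The bound would come from the discrete energy identity obtained by testing \eqref{schemeM} with $\auhk$ and \eqref{schemeC} with $\Laph \cch^k - \Fc_h^k$, combined with Lemma \ref{lem_r1} (the internal energy balance) and Lemma \ref{lm_flux} (the upwind flux cancellation), while the convex--concave splitting \eqref{f_split} renders the $F$-contribution dissipative. Homotopy invariance then yields $\deg(\mathcal{F}, B_R, 0) \neq 0$, hence existence at $\lambda = 1$.

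\textbf{Main obstacle.} The principal difficulty is designing the homotopy so that along $\lambda$ one simultaneously preserves (i) the positivity of $\vrh^k[\vuh^k]$, without which $p(\vrh^k)$ and $P'(\vrh^k)$ become ill-defined and the pressure term in the momentum equation blows up; and (ii) the flux-cancellation identity of Lemma \ref{lm_flux}, which is the algebraic backbone of the uniform energy bound and hence of the required $R$-independent estimate. A natural choice is to scale only the convective couplings in \eqref{schemeM}--\eqref{schemeC} by $\lambda$, while leaving the upwind diffusion, the pressure term, and the dissipative Allen--Cahn structure intact; the $M$-matrix argument and Lemma \ref{lm_flux} both survive this deformation verbatim, so the energy inequality at every $\lambda \in [0,1]$ supplies the needed bound.
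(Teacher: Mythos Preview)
Your overall strategy---topological degree plus discrete energy estimates---is the paper's strategy as well, but you organize it differently and your proposed homotopy has a genuine gap.

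\textbf{Different organization.} You eliminate $\vrh^k$ first: the continuity scheme for fixed $\vuh^k$ is a linear $M$-matrix system, so $\vuh^k\mapsto\vrh^k[\vuh^k]$ is a continuous, strictly positive map, and the degree argument runs on $(\vuh^k,\cch^k)$ alone. The paper instead keeps $\vrh^k$ among the degree variables and proves positivity \emph{inside} the a-priori-bound step by a minimum-principle argument (test \eqref{schemeD} with $1_K$ at the cell where $\vr_K^k$ is smallest and use $\jump{\vrh^k}\ge 0$ there). The two positivity mechanisms are equivalent; your reduction shrinks the degree problem, while the paper's version avoids analyzing the implicit map $\vuh\mapsto\vrh[\vuh]$.

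\textbf{The gap.} Your homotopy endpoint is inconsistent with your own requirement. You ask that $\lambda=0$ be a linear operator (``identity plus a dissipative block''), yet you propose to leave the pressure $p(\vrh[\vuh])$ and the Allen--Cahn nonlinearity $\Fc_h$ intact. With those present the $\lambda=0$ system is still nonlinear and its degree is not computed. The paper's homotopy scales \emph{every} nonlinear and coupling term by the parameter $\zeta$: the continuity flux (so that $\vrh^k\equiv\vrh^{k-1}$ at $\zeta=0$), the pressure, the momentum convection, the momentum--Allen--Cahn coupling, and $\Fc_h$; it also augments the discrete Laplacian by $(1-\zeta)\intO{\cch\,\psi_h}$ to recover coercivity on constants (recall $\Laph$ has a one-dimensional kernel on the torus). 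At $\zeta=0$ the system then splits into three decoupled, uniquely solvable linear problems, so Hypothesis~2 of the degree theorem is immediate. Note that your stated worry---that scaling the pressure would endanger positivity---is unfounded in either framework: in yours, $\vrh^k[\vuh^k]>0$ for every $\vuh^k$ and every $\lambda$ by your own $M$-matrix argument; in the paper's, the continuity flux is scaled too, so positivity at small $\zeta$ is inherited from $\vrh^{k-1}$.
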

The proof can be done analogously as \cite[Lemma 11.3]{FeLMMiSh}. For completeness, we give the proof in Appendix~\ref{Sec_Ap_exi}.

 \subsubsection{ Energy estimates}
Now, we are ready to  derive the discrete counterpart of the total energy balance \eqref{i5}.
\begin{Theorem}[Discrete energy  balance]\label{thm_sta}
Let $(\vrh,\vuh, \cch)$ be a solution of the DG-FE method \eqref{scheme}. Then we have the following energy estimate
\begin{equation} \label{sta}
\begin{split}
& D_t \intOB{ \frac{1}{2} \vrh^k  \abs{\auhk}^2  + P(\vrh^k)}
+  D_t \left(\intO{  F(\cch^k)} + \frac12 \normB{\cch^k}^2  \right)
\\&+  \nu   \norm{\Gradh \vuh^k}_{L^2}^2  + \eta \norm{  \Divh \vuh^k}_{L^2}^2
+ \norm{ D_t \cch^k + \vuh^k \cdot \Gradh \cch^k } _{L^2}^2  
= -  D_{\rm num}^k ,
\end{split}
\end{equation}
where 
$D_{\rm num}^k \geq 0$ is the  numerical dissipation 
\begin{equation} \label{dnum}
\begin{split}
 D_{\rm num}^k
&=   \frac{\TS}{2} \intO{ \vrh^{k-1}|D_t \auhk|^2  }
 + \frac12  \intfaces{ (\vrh^k)^{\rm up} \abs{\us^k \cdot \vn }  \abs{\jump{ \auhk }}^2 }
+ h^\eps \intfaces{  \avs{\vrh^k}  \abs{\jump{\auhk}}^2}
\\ & \quad 
+ \frac{\TS}2 \intO{ P''(\xi)|D_t \vrh^k|^2  }
+   \intfaces{ P''(\zeta) \jump{  \vrh^k } ^2 \left(h^\eps  + \frac12 | \us^k \cdot \vn | \right)}
\\&\quad  
+  \frac{\TS}{ 2 }     \normB{D_t \cch^k}^2 
 + \intO{  \frac{\TS}2 \big(1+ 1_{|\cch^k|> 1} +  3 (\cch^{k,*})^2 1_{|\cch^k|\leq 1} \big)  \abs{D_t \cch^k}^2}
, 
\end{split}
\end{equation}
where $\zeta \in \co{\vr_K^k}{\vr_L^k}$ for any $\sigma=K|L \in \facesint$,  $\xi \in \co{\vrh^{k-1}}{\vrh^k}$ and $\cch^{k,*} \in \co{\cch^{k-1}}{\cch^k}$. 
\end{Theorem}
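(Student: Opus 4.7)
The strategy mirrors the continuous energy identity \eqref{i5}, done cell-wise at the discrete level, with care taken so that each discretization error is non-negative and ends up inside $D_{\rm num}^k$. I first test the momentum scheme \eqref{schemeM} with $\bfphi_h=\vuh^k$; since $\widehat{\bfphi_h}=\Piq \vuh^k=\auhk$, the time-derivative contribution collapses to $\int_\Omega D_t(\vrh^k \auhk)\cdot\auhk$, and the algebraic identity
\[
D_t(\vrh^k \auhk)\cdot \auhk = D_t\!\left(\tfrac12 \vrh^k |\auhk|^2\right) + \tfrac12 D_t\vrh^k\,|\auhk|^2 + \tfrac{\TS}{2}\vrh^{k-1}|D_t \auhk|^2
\]
extracts the discrete kinetic energy plus the first term of \eqref{dnum}. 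The middle piece $\tfrac12 D_t \vrh^k |\auhk|^2$ is removed by testing the continuity equation \eqref{schemeD} with $\phi_h=\tfrac12 |\auhk|^2 \in \Qh$; combining the resulting face integral with $-\intfaces{\bFup(\vrh^k \auhk,\vuh^k)\cdot\jump{\auhk}}$ and applying Lemma~\ref{lm_flux} gives precisely the upwind-convection and stabilization entries of \eqref{dnum}. The pressure-work term $-\int p_h^k \Divh \vuh^k$ is then absorbed using the discrete internal-energy balance of Lemma~\ref{lem_r1} with $\Hc\equiv P$, supplying $D_t\intO{P(\vrh^k)}$ together with the two $P''$-weighted dissipation terms.

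Next, for the coupling I test the Allen--Cahn equation \eqref{schemeC} with $\psi_h = D_t \cch^k + \vuh^k\cdot\Gradh \cch^k \in \Xh$. The left-hand side is exactly $\|D_t \cch^k+\vuh^k\cdot \Gradh \cch^k\|_{L^2}^2$, and the convective portion of the right-hand side cancels identically with the coupling source $\intO{(\Fc_h^k - \Laph \cch^k)\Gradh \cch^k\cdot \vuh^k}$ inherited from the momentum step. What remains is $\intO{(\Laph \cch^k-\Fc_h^k) D_t \cch^k}$. Applying \eqref{bilinear} (admissible on $\Xh$, as noted after its definition) together with \eqref{dtB} for $(v,w)=(\cch^k,\cch^{k-1})$ yields
\[
\int_\Omega \Laph \cch^k\, D_t \cch^k = -B(\cch^k, D_t \cch^k) = -D_t\!\left(\tfrac12 \normB{\cch^k}^2\right) - \tfrac{\TS}{2}\normB{D_t \cch^k}^2,
\]
which is the penultimate line of \eqref{dnum}. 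For $-\intO{\Fc_h^k D_t\cch^k}$ I split according to \eqref{f_split}: on each implicit branch Taylor's theorem applied to $F$ (piecewise $C^2$) gives
\[
F'(\cch^k) D_t\cch^k = D_t F(\cch^k) + \tfrac{\TS}{2} F''(\cch^{k,*}) |D_t \cch^k|^2, \qquad \cch^{k,*}\in \co{\cch^{k-1}}{\cch^k},
\]
with $F''=2$ on $|\cch^k|>1$ and, for the convex cubic term in $|\cch^k|\le 1$, the same identity applied to the potential $\tfrac14 \cch^4$ gives $F''=3(\cch^{k,*})^2$; the explicit concave contribution $-\cch^{k-1}$ on $|\cch^k|\le 1$ satisfies the elementary identity $-\cch^{k-1}D_t\cch^k = D_t(-\tfrac12 (\cch^k)^2)+\tfrac{\TS}{2}|D_t \cch^k|^2$. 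Summing these reproduces the last line of \eqref{dnum}, and gathering all contributions yields \eqref{sta}.

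\textbf{Main obstacle.} The principal subtlety is the treatment of $\Fc_h^k$: one must verify that the mixed implicit/explicit splitting \eqref{f_split} yields a non-negative remainder on \emph{every} branch, including the transition case where $\cch^{k-1}$ and $\cch^k$ lie in different pieces of $F$, and that the intermediate point produced by Taylor's theorem can be identified with an element of $\co{\cch^{k-1}}{\cch^k}$ as required by the precise formula \eqref{dnum}. A smaller but essential structural point is that the coupling cancellation between the momentum and Allen--Cahn equations works only with the choice $\psi_h = D_t\cch^k + \vuh^k\cdot\Gradh\cch^k$, not with the simpler $\psi_h = D_t \cch^k$; this is what forces the $L^2$ dissipation $\|D_t\cch^k + \vuh^k\cdot\Gradh\cch^k\|_{L^2}^2$ (and not just $\|D_t\cch^k\|_{L^2}^2$) to appear on the left-hand side of \eqref{sta}, mirroring the continuous quantity $\mu^2 = (\partial_t \cc + \vu\cdot\Grad\cc)^2$.
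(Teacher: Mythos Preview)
Your proposal is correct and follows essentially the same route as the paper: test \eqref{schemeM} with $\vuh^k$, test \eqref{schemeD} with $\tfrac12|\auhk|^2$, combine via the kinetic-energy identity and Lemma~\ref{lm_flux}, invoke Lemma~\ref{lem_r1} for the pressure potential, and test \eqref{schemeC} with $D_t\cch^k+\vuh^k\cdot\Gradh\cch^k$ using \eqref{dtB} and the convex--concave Taylor splitting of $F$. Your flagged ``main obstacle'' concerning the transition case $\cch^{k-1}$ and $\cch^k$ in different branches of $F$ is a genuine subtlety that the paper's proof also glosses over; everything else matches.
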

\begin{proof}
First, setting $\bfphi_h = \vuh^k \in \bVh $ in \eqref{schemeM} we get
\begin{equation}\label{es1}
\begin{split}
& \intO{D_t (\vrh^k \auhk) \cdot \vuh^k  }
+ \nu   \norm{\Gradh \vuh^k}_{L^2}  + \eta \norm{  \Divh \vuh^k}_{L^2} 
\\& =
  \intfaces{ \bFup(\vrh^k \auhk,\vuh^k) \cdot \jump{\auhk  }   } 
  + \intO{ p_h^k  \Divh \vuh^k } 
+\intO{ ( \Fc_h^k - \Laph \cch^k )\Gradh \cch^k  \cdot \vuh^k}.
\end{split}
\end{equation}
Next, letting  $\phi_h = \frac{1}{2} \abs{\auhk}^2 \in \Qh$ in \eqref{schemeD} we find
\begin{equation}\label{es2}
\intO{ D_t \vrh^k   \frac{1}{2} \abs{\auhk}^2  } =  \intfaces{ \Fup [\vrh^k, \vuh^k ]  \jump{  \frac{1}{2} \abs{\auhk}^2 }  }.
\end{equation}
Subtracting \eqref{es2} from \eqref{es1} we derive 
\begin{equation} \label{es3}
\begin{split}
& D_t \intO{ \frac{1}{2} \vrh^k \abs{\auhk}^2  }
+  \nu   \norm{\Gradh \vuh^k}_{L^2}^2  + \eta \norm{\Divh \vuh^k}_{L^2} ^2
\\& =  - \frac{\TS}{2} \intO{ \vrh^{k-1} \abs{ D_t \auhk}^2  }
-\frac12 \intfaces{ \vrh^{k, \up} \abs{\jump{\auhk}}^2  \abs{\vu^k_\sigma \cdot \vn}}
\\&\quad   - h^\eps \intfaces{  \avs{\vrh^k} \abs{\jump{\auhk}}^2}
+  \intO{ p_h^k  \Divh \vuh^k  } 
+ \intO{ (f_h^k - \Laph \cch^k) \Gradh \cch^k \cdot  \vuh^k}
.\end{split}
\end{equation}
where we have used \eqref{Flux} and the following  identity
\[
 \intOB{ D_t (\vrh \auh)^k  \cdot \vuh^k - D_t \vrh^k \frac{\abs{\auhk}^2}{2} }
= \intOB{ D_t \Big(\frac12 \vrh^k \abs{\auhk}^2 \Big)   + \frac{\TS}{2} \vrh^{k-1} \abs{D_t\auhk}^2}. 
 \]
Further, by setting $ \psi_h = D_t \cch^k + \vuh^k \cdot \Gradh \cch^k \in \Xh$  in \eqref{schemeC}  we get 
\begin{equation}\label{es4}
\begin{aligned}
&\intO{\abs{ D_t \cch^k + \vuh^k \cdot \Gradh \cch^k  }^2} = 
 \intO{ (\Laph \cch^k -\Fc_h^k ) (D_t \cch^k  + \vuh^k \cdot \Gradh \cch^k)}
\\& 
= \intO{ (\Laph \cch^k -\Fc_h^k )  \vuh^k \cdot \Gradh \cch^k} 
-B(\cch^k, D_t \cch^k)  
-  \intO{ \Fc_h^k   D_t \cch^k} 
\\&
= \intO{ (\Laph \cch^k -\Fc_h^k )  \vuh^k \cdot \Gradh \cch^k} 
- \frac12 D_t \normB{\cch^k}^2 - \frac{\TS}{2} \normB{ D_t \cch^k }^2 
\\&
 \quad - \intOB{ D_t F(\cch^k)  +  \frac{\TS}2 \big(1+ 1_{|\cch^k|> 1} +  3 (\cch^{k,*})^2 1_{|\cch^k|\leq 1} \big)  \abs{D_t \cch^k}^2},
\end{aligned}
\end{equation}
where we have used \eqref{dtB} and the following two applications of Taylor's theory 
\begin{align*}
2  \left( \cch^k \pm 1 \right)  D_t \cch^k
& 
 = D_t (\cch^k \pm 1)^2 + \TS (D_t \cch^k)^2 = D_t F(\cch^k) +  \TS (D_t \cch^k)^2,  \quad \mbox{ for } |\cch^k| \geq1, 
\\
  \left( (\cch^k)^3 -  \cch^{k-1} \right)   D_t \cch^k
 &=\frac14 D_t (\cch^k)^4  +\frac{\TS}{2 }  3 (\cch^{k,*})^2 \abs{D_t \cch^k}^2 - \frac12 D_t (\cch^k)^2 + \frac{\TS}2  \abs{D_t \cch^k}^2
\\& =   D_t F(\cch^k)  +    \frac{\TS}2   \big(3 (\cch^{k,*})^2 +1 \big)  (D_t \cch^k)^2, \quad \mbox{ for } |\cch^k| \leq 1.
\end{align*}
Here $\cch^{k,*} \in \co{\cch^{k-1}}{\cch^k}$ is a Taylor remainder term. 

Finally, combining \eqref{es3} and \eqref{es4} together with \eqref{r1}, we complete the proof, i.e.,  
\begin{align*} 
& D_t \intOB{ \frac{1}{2} \vrh^k  \abs{\auhk}^2  + P(\vrh^k)}
+  D_t \left(\intO{  F(\cch^k)}+ \frac12 \normB{\cch^k}^2  \right)
\\&+  \nu   \norm{\Gradh \vuh^k}_{L^2}^2  + \eta \norm{  \Divh \vuh^k}_{L^2}^2
+ \norm{ D_t \cch^k + \vuh^k \cdot \Gradh \cch^k } _{L^2}^2  
%
\\&
=  - \frac{\TS}{2} \intO{ \vrh^{k-1}|D_t \vuh^k|^2  }
-\frac12  \intfaces{ (\vrh^k)^{\rm up} \abs{\us^k \cdot \vn } \abs{\jump{\auhk}}^2   }  
- h^\eps \intfaces{  \avs{\vrh^k} \abs{\jump{\auhk}}^2}
\\ & \quad 
- \frac{\TS}2 \intO{ P''(\xi)|D_t \vrh^k|^2  }
-   \intfaces{ P''(\zeta) \jump{  \vrh^k } ^2 \left(h^\eps  +\frac12 | \us^k \cdot \vn | \right)}
\\&\quad 
 - \frac{\TS}{2} \normB{D_t \cch^k}^2   
 - \intO{  \frac12 \big(1+ 1_{|\cch^k|> 1} +  3 (\cch^{k,*})^2 1_{|\cch^k|\leq 1} \big) \TS \abs{D_t \cch^k}^2}
 \\& = -  D_{\rm num}^k .
\end{align*}
\end{proof}

\paragraph{Uniform bounds.}
From the energy estimates we derive the following uniform bounds. 
\begin{Lemma}\label{Lm_est}
 Let $(\vrh,\vuh, \cch)$ be a solution to the scheme \eqref{scheme} for $\gamma>1$.  
Then, the following estimates hold:
\begin{subequations}\label{ests}
\begin{align}
\label{est1}
\norm{\vrh |\auh|^2}_{L^{\infty}L^1}  \aleq 1, \quad
\norm{\vrh}_{L^{\infty}L^\gamma}  \aleq 1,\quad
 \norm{\vrh\auh}_{L^\infty L^{\frac{2\gamma}{\gamma+1}}}  \aleq 1, \\ 
 \label{est2}
 \norm{\Gradh \vuh}_{L^2 L^2}\aleq 1, \quad   
 \norm{\Divh \vuh}_{L^{2}L^2}\aleq 1, \quad  
 \norm{\vuh}_{L^{2}L^p}   \aleq 1,\\
\label{est3}
   \sup_{t\in(0,T)} \normB{ \cch(t)  }   \aleq 1, \quad 
\norm{ \Fc_h  } _{L^\infty L^2}  \approx \norm{ \cch  } _{L^\infty L^2} \aleq   \norm{ F(\cch)}_{L^\infty L^1} \aleq 1,
 \\ 
 \label{est4}
    \norm{\cch}_{L^\infty L^p} \aleq \sup_{t\in(0,T)} \normB{ \cch  }  +  \norm{ \cch  } _{L^\infty L^2} \aleq 1, 
 \quad  \norm{ D_t \cch + \vuh \cdot \Gradh \cch } _{L^2L^2}  \aleq 1,
\\  \label{est5} 
\norm{\Laph \cch}_{L^2L^2} \aleq  1, \quad 
\norm{D_t \cch}_{L^2L^{3/2}} \aleq 1.
\end{align}
\end{subequations}
where $\normH{\cdot}$ and $\normB{\cdot}$ are defined in \eqref{seminorm}, 
$p \in [ 1,\infty)$ if $d=2$ or $p \in [1,6]$ if $d=3$.
\end{Lemma}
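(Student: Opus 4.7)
\textbf{Proof proposal for Lemma~\ref{Lm_est}.}

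The plan is to derive everything from the discrete energy balance in Theorem~\ref{thm_sta} by summation over time levels, combined with structural properties of $P$ and $F$, the discrete Sobolev inequality (Lemma~\ref{lm_SP}), and H\"older's inequality. First I would multiply \eqref{sta} by $\TS$, sum over $k=1,\dots,N$ for any $N \leq N_T$, telescope the $D_t$ terms, and use stability of the initial projections together with the finiteness of the initial energy to control the right-hand side uniformly in $h$ and $\TS$. Since $D_{\rm num}^k \geq 0$, I obtain that
\[
\sup_{k} \intO{\left( \tfrac{1}{2}\vrh^k |\auhk|^2 + P(\vrh^k) + F(\cch^k) + \tfrac{1}{2}\normB{\cch^k}^2 \right)}
+ \sum_{k} \TS \left( \nu \|\Gradh\vuh^k\|_{L^2}^2 + \eta \|\Divh\vuh^k\|_{L^2}^2 + \|D_t\cch^k + \vuh^k\cdot\Gradh\cch^k\|_{L^2}^2 \right)
\]
is bounded by a constant depending only on the data. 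This immediately delivers \eqref{est1}$_1$, the first two bounds in \eqref{est2}, the last bound in \eqref{est4}, and the bound on $\normB{\cch}$ in \eqref{est3}.

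Next, for \eqref{est1}$_2$ I invoke hypothesis \eqref{H3bis}--\eqref{H3a}: $P(\vrh) \geq a\vrh^\gamma$ for $\vrh\geq 1$, while for $\vrh < 1$ the mass is trivially bounded (using conservation of mass from Section~\ref{Sec_nt}). For \eqref{est1}$_3$ I use the standard splitting $\vrh\auh = \sqrt{\vrh}\cdot\sqrt{\vrh}\,\auh$ together with H\"older with exponents $(2\gamma,2)$ summing to $(\gamma+1)/(2\gamma)$, obtaining
\[
\|\vrh\auh\|_{L^{2\gamma/(\gamma+1)}} \leq \|\vrh\|_{L^\gamma}^{1/2} \, \|\vrh|\auh|^2\|_{L^1}^{1/2}.
\]
For the last bound in \eqref{est2}, I apply Lemma~\ref{lm_SP} componentwise to $\vuh$ with $r=\vrh$ and $\Pi_a=\Piq$: the mass bound $\intO{\vrh}\geq c_M$ and the $L^\gamma$ bound provide admissible $c_M, c_E$, while $\intO{\vrh |\Piq\vuh|^2} \lesssim \|\sqrt{\vrh}\auh\|_{L^2}^2 \lesssim 1$.

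For the concentration bounds in \eqref{est3}--\eqref{est4}, I exploit the coercivity of $F$: for $|\cch|\geq 2$, $F(\cch) = (\cch\pm 1)^2 \geq \tfrac{1}{4}\cch^2$, while for $|\cch|\leq 2$ we have $\cch^2 \leq 4$, hence $\|\cch\|_{L^\infty L^2}^2 \lesssim 1 + \|F(\cch)\|_{L^\infty L^1} \lesssim 1$. The relation $\|\Fc_h\|_{L^\infty L^2} \approx \|\cch\|_{L^\infty L^2}$ then follows by inspecting the three regimes of the splitting \eqref{f_split} (linear in $\cch$ for $|\cch|>1$, bounded for $|\cch|\leq 1$). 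Applying Lemma~\ref{lm_SP} with $r=1$, $\Pi_a=1$ to $\cch \in \Xh$ yields $\|\cch\|_{L^p}^2 \lesssim \|\Gradh\cch\|_{L^2}^2 + \|\cch\|_{L^2}^2 \lesssim \normB{\cch}^2 + \|\cch\|_{L^2}^2$, giving the $L^\infty L^p$ bound on $\cch$ for the stated range of $p$.

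Finally, for \eqref{est5} I read the Allen--Cahn scheme \eqref{schemeC} as
\[
\Laph \cch = \Fc_h + D_t\cch + \vuh\cdot\Gradh\cch,
\]
where each summand is already controlled in $L^2 L^2$, giving $\|\Laph\cch\|_{L^2 L^2} \lesssim 1$. For $D_t\cch \in L^2L^{3/2}$ I rewrite $D_t\cch = -\vuh\cdot\Gradh\cch + \Laph\cch - \Fc_h$ and estimate the transport term by H\"older: $\|\vuh\cdot\Gradh\cch\|_{L^{3/2}} \leq \|\vuh\|_{L^6}\|\Gradh\cch\|_{L^2}$, which lies in $L^2L^{3/2}$ thanks to $\vuh \in L^2L^6$ (case $d=3$; the case $d=2$ is easier) and $\Gradh\cch \in L^\infty L^2$; the remaining summands are in $L^2 L^2 \subset L^2 L^{3/2}$. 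The main obstacle is only cosmetic: carefully verifying that the hypotheses of the discrete Sobolev inequality apply and that the $L^p$ exponents match the stated ranges; everything else is a direct consequence of the energy balance and standard H\"older manipulations.
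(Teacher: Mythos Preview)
Your approach is correct and mirrors the paper's: obtain \eqref{est1}--\eqref{est4} directly from the summed energy balance of Theorem~\ref{thm_sta} combined with the discrete Sobolev--Poincar\'e inequality (Lemma~\ref{lm_SP}), then extract \eqref{est5} from the Allen--Cahn scheme. One minor imprecision is worth flagging: the scheme \eqref{schemeC} is a weak identity against test functions $\psi_h \in \Xh$, and since $\Fc_h^k$ contains the cubic $(\cch^k)^3$ on the set $\{|\cch^k|\leq 1\}$ it does not lie in $\Xh$, so the relation $\Laph\cch = \Fc_h + D_t\cch + \vuh\cdot\Gradh\cch$ is not a pointwise equality in $L^2$. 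The paper circumvents this by testing \eqref{schemeC} with $\psi_h = \Laph\cch^k \in \Wh$ and using Young's inequality to absorb $\tfrac12\|\Laph\cch^k\|_{L^2}^2$ into the left-hand side; your argument is salvaged just as easily by observing that $\Laph\cch^k$ equals the $L^2$-orthogonal projection onto $\Xh$ of $D_t\cch^k + \vuh^k\cdot\Gradh\cch^k + \Fc_h^k$, and that this projection is an $L^2$-contraction. Either way the conclusion and the overall strategy are the same.
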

\begin{proof}
Applying the Sobolev-Poincar\'e inequality Lemma~\ref{lm_SP} to the energy estimates stated in Theorem~\ref{thm_sta} we directly obtain the  estimates \eqref{est1}--\eqref{est4}.  
We are left with the proof of \eqref{est5}. 
First, we set $\psi_h = \Laph \cch^k$ in \eqref{schemeC} to get 
\begin{align*}
\norm{\Laph \cch^k}_{L^2}^2 &= 
 \intO{ (D_t \cch^k  + \vuh^k \cdot \Gradh \cch^k) \Laph \cch^k } +  \intO{   \Fc_h^k  \Laph \cch^k   }
 \\& \leq \norm{D_t \cch^k  + \vuh^k \cdot \Gradh \cch^k}_{L^2}^2 + \frac14 \norm{\Laph \cch^k}_{L^2}^2  +  \norm{\Fc_h^k}_{L^2}^2 + \frac14 \norm{\Laph \cch^k}_{L^2}^2. 
\end{align*}
Then we observe the first estimate of \eqref{est5} after 
recalling the bounds $\norm{D_t \cch  + \vuh \cdot \Gradh \cch}_{L^2L^2}\aleq 1$ and $\norm{\Fc_h}_{L^\infty L^2} \aleq 1$.  

Next, due to the uniform bound $\norm{\vuh \cdot \Gradh \cch}_{L^2L^{3/2}} \leq \norm{\vuh}_{L^2L^6} \norm{ \Gradh \cch}_{L^\infty L^2} \aleq 1$ and the second estimate of \eqref{est4} we proves the second estimate of \eqref{est5}, 
which completes the proof. 
\end{proof}

\subsection{Consistency}
\label{sec_C}

Next step towards the convergence of the approximate solutions is the consistency of the numerical scheme. In particular, we require the numerical solution to satisfy the weak formulation of the  continuous problem up to  residual terms  vanishing for $h \to 0.$
\begin{Theorem} \label{Thm_con}
Let $(\vrh, \vuh, \cch)$ be a solution of the approximate problem \eqref{scheme} on the time interval $[0,T]$ with $\TS\approx h$,   $ \gamma  > 4d/(1+3d)$ and the artificial diffusion coefficient $\eps$ satisfies
\begin{equation}\label{eps}
\eps >0 \mbox{ if } \gamma \geq 2 \quad \mbox{ and }\quad   \eps \in(0, 2 \gamma-1 -d/3) \mbox{ if } \gamma \in(4d /(1+3d),2).
\end{equation}
Then
\begin{subequations}
\begin{equation} \label{cP1}
- \intO{ \vrh^0 \phi(0,\cdot) }  =
\int_0^T \intO{ \left[ \vrh \partial_t \phi + \vrh \vuh \cdot \Grad \phi \right]} \dt  + \int_0^T
e_{1,h} (t, \phi) \dt,
\end{equation}
for any $\phi \in C_c^2([0,T) \times \Omega)$ with $\| e_{1,h} (\cdot, \phi ) \|_{L^1(0,T)} \aleq h^\alpha$ for some $ \alpha > 0$;
\begin{equation} \label{cP2}
\begin{split}
- &\intO{ \vrh^0 \widehat{\vuh^0} \cdot \bfphi(0,\cdot) }  =
\int_0^T \intO{ \left[ \vrh \auh \cdot \partial_t \bfphi + \vrh \auh \otimes \vuh  : \Grad \bfphi  + p_h \Div \bfphi \right]} \dt
\\&
 -\nu  \int_0^T \intO{  \Gradh \vuh : \Grad \bfphi}  \dt -\eta \int_0^T \intO{ \Divh \vuh  \Div \bfphi}  \dt 
\\& + \int_0^T \intO{\big( \Fc_h - \Laph \cch  \big) \Gradh \cch     \cdot \bfphi} \dt
 + \int_0^T e_{2,h} (t, \bfphi) \dt
\end{split}
\end{equation}
for any $\bfphi \in C^2_c([0,T) \times {\Omega}; \mathbb{R}^d)$ with $\| e_{2,h} (\cdot, \bfphi ) \|_{L^1(0,T)} \aleq h^\alpha$ for some $ \alpha > 0$;
\begin{equation} \label{cP3}
- \intO{ \cch^0 \psi(0,\cdot) }  =
\int_0^T \intOB{  \cch \partial_t \psi -  \vuh \cdot \Grad \cch \psi    + (\Laph \cch - \Fc_h ) \psi}  \dt  
+ \int_0^T e_{3,h} (t, \psi) \dt,
\end{equation}
for any $\psi \in  C_c^1 ([0,T) \times \Omega)$ with $\| e_{3,h} (\cdot, \psi ) \|_{L^1(0,T)} \aleq h^\alpha$ for some $ \alpha > 0$.
\end{subequations}
\end{Theorem}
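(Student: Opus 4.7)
The plan is to verify each of the three consistency identities by testing the discrete scheme \eqref{scheme} with the natural interpolants $\Piq \phi$, $\Piv \bfphi$, $\Pix \psi$ of the smooth test functions, multiplying by $\TS$, summing over $k=1,\dots,N_T$, and applying discrete summation by parts in time. Each resulting expression decomposes as the desired continuous weak form plus residuals that I must control by $h^\alpha$ in $L^1(0,T)$. The basic tools are the interpolation estimates \eqref{n4c2} and \eqref{crucial}, the uniform bounds \eqref{est1}--\eqref{est5}, the numerical dissipation produced by Theorem~\ref{thm_sta}, and the flux identity \eqref{Flux}.

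For the continuity equation \eqref{cP1}, first I would transfer the discrete time derivative onto the test function:
\[
\sum_{k=1}^{N_T} \TS \intO{ D_t \vrh^k \, \Piq\phi^k } = -\intO{ \vrh^0 \, \Piq\phi^0 } - \sum_{k=1}^{N_T-1} \TS \intO{ \vrh^k \, D_t^{+} \Piq\phi^k },
\]
and compare this with $\int_0^T\!\intO{\vrh\, \partial_t\phi}$ using \eqref{n4c2} with $s=2$ together with $\vrh \in L^\infty L^\gamma$. For the upwind flux, I would rewrite $\Fup(\vrh,\vuh)\jump{\Piq\phi}$ as a consistent centered part giving $\vrh \vuh\cdot\Grad\phi$, plus upwind and artificial diffusion corrections. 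The correction containing $|\us\cdot\vn|\,\jump{\vrh}$ is bounded by Cauchy--Schwarz against $\jump{\Piq\phi}$ using the numerical dissipation $D_{\rm num}$ (which controls $\sum |\us\cdot\vn|\,\jump{\vrh}^2$) and $\jump{\Piq\phi}= O(h)$ on smooth $\phi$. The pure $h^\eps$ diffusive term is handled analogously, producing $O(h^{\eps/2 + 1})$.

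For the momentum equation \eqref{cP2}, I would proceed in the same manner testing \eqref{schemeM} with $\Piv\bfphi$. The time derivative and the convective/upwind contributions are treated as above, now using the identity \eqref{Flux} together with the dissipations $\int h^\eps \avs{\vrh}|\jump{\auh}|^2$ and $\int \vrh^{\up}|\us\cdot\vn|\,|\jump{\auh}|^2$ available from Theorem~\ref{thm_sta}. The viscous and divergence terms contribute routine interpolation errors via \eqref{n4c2}. The Allen--Cahn coupling $(\Fc_h-\Laph \cch)\Gradh \cch\cdot(\Piv\bfphi-\bfphi)$ is absorbed using the bounds $\|\Fc_h\|_{L^\infty L^2}$, $\|\Laph \cch\|_{L^2 L^2}$ and $\|\Gradh \cch\|_{L^\infty L^2}$ from \eqref{est3}--\eqref{est5}, combined with $\|\Piv \bfphi - \bfphi\|_\infty \aleq h$.

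For the Allen--Cahn equation \eqref{cP3}, testing \eqref{schemeC} with $\Pix \psi$ the delicate term is the discrete Laplacian. Using \eqref{bilinear} I would write
\[
\intO{\Laph \cch^k \, \psi^k} = \intO{\Laph \cch^k \, \Pix\psi^k} - \intO{\Laph \cch^k (\Pix\psi^k - \psi^k)},
\]
and for the last piece invoke the bound $\int \Laph \cch \cdot (\Pix\psi-\psi) = -B(\cch, \Pix\psi-\psi)$, controlled by $\normB{\cch}\cdot\normB{\Pix\psi-\psi}$ via \eqref{crucial} and \eqref{est3}. The convective and nonlinear source terms yield residuals of order $h$ via \eqref{n4c2} and the uniform bounds in Lemma~\ref{Lm_est}. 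Finally, I would collect everything, fix $\alpha>0$ as the minimum exponent obtained, and invoke $\TS \approx h$ to turn all $\TS$-residuals into $h$-residuals.

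The main obstacle will be the pressure residual $\int_0^T\!\intO{p_h(\Div\bfphi - \Divh \Piv\bfphi)}$ in \eqref{cP2}. Only $\vrh\in L^\infty L^\gamma$ is available, so $p_h$ has borderline integrability when $\gamma$ is close to $1$; to integrate it against the interpolation error of the test function one needs an improved pressure estimate of the form $\|p_h\|_{L^p L^p}\aleq 1$ with some $p>1$. This is exactly where the hypothesis $\gamma>4d/(1+3d)$ together with the restriction \eqref{eps} on $\eps$ enters: the condition $\eps < 2\gamma - 1 - d/3$ for $\gamma<2$ guarantees that the artificial diffusion contribution $h^\eps \jump{\vrh}^2$ in $D_{\rm num}^k$ does not swamp the pressure estimates obtained via a Bogovskii-type construction, and it allows one to absorb the pressure residual at a positive power of $h$.
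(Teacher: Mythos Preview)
Your overall strategy matches the paper's: test the discrete equations with the projected smooth functions, perform discrete summation by parts in time, and estimate the interpolation residuals via the uniform bounds of Lemma~\ref{Lm_est}. The paper actually does not carry out Steps~1 and~2 in detail but simply quotes \cite[Theorem~13.2]{FeLMMiSh} for the continuity and Navier--Stokes parts of the consistency, and then only checks the new Allen--Cahn coupling term in \eqref{cP2} exactly as you propose; your identification of the pressure residual as the place where the hypothesis \eqref{eps} enters is correct.

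There is one technical slip in your treatment of \eqref{cP3}. The identity you write,
\[
\intO{ \Laph \cch \,(\Pix\psi-\psi)} = -\,B\big(\cch,\;\Pix\psi-\psi\big),
\]
is not valid: the defining relation \eqref{bilinear} only holds for test functions in $\Wh$ (equivalently $\Xh$), and $\Pix\psi-\psi$ is not a discrete function. The paper avoids this detour entirely: it simply groups the whole expression $(D_t\cch+\vuh\cdot\Gradh\cch)-\Laph\cch+\Fc_h$ and bounds its pairing with $\psi_h-\psi$ directly by Cauchy--Schwarz, using the $L^2L^2$ bound on $\Laph\cch$ from \eqref{est5} and $\|\Pix\psi-\psi\|_{L^2}\aleq h$ from \eqref{n4c2}. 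With this simpler estimate in place, your handling of the time derivative via summation by parts coincides with the paper's argument.
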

\begin{proof}
\textbf{Step 1: proof of \eqref{cP1}.}
Recalling the first estimate of \cite[Theorem 13.2]{FeLMMiSh} we have \eqref{cP1}. 

\textbf{Step 2: proof of \eqref{cP2}.}
Recalling the second estimate of \cite[Theorem 13.2]{FeLMMiSh}, we know there exists a positive constant $\alpha$ such that 
\begin{equation}\label{c1}
\begin{split}
&\intTO{ D_t  (\vrh \auh) \cdot \bfphi_h } - \int_0^T \intfaces{ \bFup(\vrh \auh,\vuh)  \cdot \jump{\widehat{ \bfphi_h} }   } \dt 
\\&
 +\nu  \int_0^T \intO{  \Gradh \vuh : \Gradh \bfphi}  \dt +\eta \int_0^T \intO{ \Divh \vuh  \Divh \bfphi}  \dt 
-  \intTO{ p_h  \Divh \bfphi_h  } 
\\&=
- \intO{ \vrh^0 \widehat{\vuh^0} \cdot \bfphi(0,\cdot) }  -
\int_0^T \intO{ \left[ \vrh \auh \cdot \partial_t \bfphi + \vrh \auh \otimes \vuh  : \Grad \bfphi  + p_h \Div \bfphi \right]} \dt,
\\& \quad 
  +\nu  \int_0^T \intO{  \Gradh \vuh : \Grad \bfphi}  \dt +\eta \int_0^T \intO{ \Divh \vuh  \Div \bfphi}  \dt 
 + c h^\alpha
\end{split}
\end{equation}
 for any $\bfphi \in C^2_c([0,T) \times \Omega; \R^d)$ 
with $\bfphi_h = \Piv \bfphi$, 
where $c$ depends on $\norm{\bfphi}_{C^2}$ and on the initial energy of the problem. 
Comparing the left hand side of \eqref{c1} with the momentum method \eqref{schemeM} we are left to treat the consistency of $\intTO{ (f_h - \Laph \cch) \Gradh \cch \cdot  \bfphi_h}$, which reads
\begin{equation}\label{c2}
\intTO{ (f_h - \Laph \cch) \Gradh \cch \cdot ( \bfphi_h -\bfphi)}
\aleq h^2 \norm{f_h - \Laph \cch}_{L^2L^2} \norm{\Gradh \cch}_{L^2L^2} \norm{\bfphi}_{C^2}
\aleq h^2,
\end{equation}
where we have used \eqref{n4c2}, the uniform bounds  \eqref{est3}, and the first estimate in \eqref{est5}. 
Obviously, combining \eqref{c1} and \eqref{c2} proves \eqref{cP2}, i.e.
\begin{equation*}
\begin{split}
- &\intO{ \vrh^0 \widehat{\vuh^0} \cdot \bfphi(0,\cdot) }  =
\int_0^T \intO{ \left[ \vrh \auh \cdot \partial_t \bfphi + \vrh \auh \otimes \vuh  : \Grad \bfphi  + p_h \Div \bfphi \right]} \dt,
\\&
 -  \nu  \int_0^T \intO{  \Gradh \vuh : \Grad \bfphi}  \dt  - \eta \int_0^T \intO{ \Divh \vuh  \Div \bfphi}  \dt 
+ \int_0^T \intO{ ( \Fc_h  -  \Laph \cch ) \Gradh \cch \cdot \bfphi }
 + c h^\alpha
\end{split}
\end{equation*}
for some $\alpha>0$, where the positive constant $c$ depends on $\norm{\bfphi}_{C^2}$ and the initial energy of the problem.  

\textbf{Step 3: proof of \eqref{cP3}.} 
Let $\psi_h =\Pix \psi$ 
be the test function in \eqref{schemeC} for $\psi \in C_c^1([0,T) \times \Omega)$. Thanks to H\"older's inequality and the uniform bounds \eqref{est3}--\eqref{est5} we first calculate 
\begin{equation}\label{c3}
\begin{aligned}
& \abs{ \intTO{ \big( (D_t \cch + \vu_h \cdot \Gradh \cch) - \Laph \cch +  \Fc_h  \big)  (\psi_h -\psi)} }
\\&
\leq   \norm{(D_t \cch + \vu_h \cdot \Gradh \cch) - \Laph \cch + \Fc_h  }_{L^2L^2} \norm{\psi_h-\psi}_{L^2L^2}
\\& \aleq h \left( \norm{D_t \cch + \vu_h \cdot \Gradh \cch}_{L^2 L^2} 
+ \norm{  \Laph \cch  }_{L^2L^2} + \norm{  \Fc_h  }_{L^\infty L^2} \right)\norm{\psi}_{C^1} 
 \aleq h.
 \end{aligned}
 \end{equation}
 Next, we rewrite the term $\intTO{ D_t \cch \psi }$ as
 \begin{equation}\label{c4}
\intTO{ D_t \cch \psi } = \intTO{ D_t \cch  ( \psi - \psi^{k-1}) }  +  \intTO{D_t \cch \psi^{k-1}} = : I_1 +I_2.
 \end{equation}
For the term $I_1$ we have by Taylor's theory and the estimate \eqref{est5} that
 \begin{align*}
\abs{ I_1 } \leq  \norm{ D_t \cch}_{L^2L^{3/2}} \TS \norm{\psi}_{C^1} \aleq \TS .
\end{align*}
Further, we treat the term $I_2$ in the following way 
 \begin{align*}
I_2 &= \intTO{D_t \cch \psi^{k-1}}  =\sum_{k=1}^{N_T} \TS \intO{D_t \cch \psi^{k-1}}
= \sum_{k=1}^{N_T}  \intO{(\cch^k -\cch^{k-1}) \psi^{k-1}}
\\& 
 = - \sum_{k=1}^{N_T}  \intO{\cch^k (\psi^k - \psi^{k-1}) } +  \intO{\cch^{N_T} \underbrace{ \psi^{N_T}}_{=0}} - \intO{\cch^0 \psi^0}
\\& = -   \intTO{\cch \pd_t \psi }  - \intO{\cch^0 \psi^0}.
\end{align*}
Substituting the above relation together with the estimate of the term $I_1$ into \eqref{c4} implies
 \begin{equation}\label{c5}
\abs{ \intTO{ D_t \cch \psi }  + \intTO{\cch \pd_t \psi }  + \intO{\cch^0 \psi^0} }  \aleq \TS\approx h. 
 \end{equation}
Finally, combining \eqref{c3} with \eqref{c5}  yields
\[
 \intTO{ - \cch \pd_t \psi - \cch^0 \psi^0 + ( \vu_h \cdot \Gradh \cch  - \Laph \cch +  \Fc_h)  \psi }  \aleq h ,
\]
which proves \eqref{cP3}, and completes the proof of Theorem~\ref{Thm_con}. 
\end{proof}

\subsection{Convergence}
In this subsection, we prove the final result, that is the convergence of the numerical solutions resulting from the DG--FE method.

\begin{Theorem}\label{Thm3}
Let $(\vrh,\vuh, \cch )$ be a solution of the DG--FE method \eqref{scheme}, with $\TS  \approx h$, $ \gamma  >  3/2$ for $d=3$ and $ \gamma >8/7$ for $d=2$,  
the artificial diffusion coefficient $\eps$ satisfies \eqref{eps}, 
and the initial data satisfying
\[
\vr_0  \in  L^\gamma(\Omega), \ \vr_0>0,\  \vu_0 \in L^2 (\Omega; \R^d), \ \cc_0 \in W^{1,2}(\Omega) .
\]
\begin{enumerate}
    \item 
Then,  for a suitable subsequence, 
\begin{equation} \label{limit}
\begin{split}
\vrh &\to \vr \ \mbox{weakly-(*) in}\ L^\infty(0,T; L^\gamma(\Omega)), \\ \vuh &\to \vu\ \mbox{weakly in}\ L^2((0,T) \times \Omega; \R^d), \\ \cch &\to \cc \ \mbox{weakly-(*) in}\ L^\infty (0,T; L^2(\Omega)),
\end{split}
\end{equation}
where $\{ \vr , \vu, \cc \}$ is a dissipative weak solution of the Navier--Stokes--Allen--Cahn system \eqref{i1}--\eqref{i2} in the sense of Definition~\ref{DD1}.
 \item 
In addition, suppose that the Navier--Stokes--Allen--Cahn system \eqref{i1}--\eqref{i2} 
with the initial data $(\vr_0, \vu_0, \cc_0)$ admits a strong solution in the class \eqref{cond_strong}.

Then the limit in \eqref{limit} is unconditional (no need of a subsequence) and the limit 
quantity $\{ \vr, \vu, \cc \}$ coincides with the strong solution.

\end{enumerate}
 
\end{Theorem}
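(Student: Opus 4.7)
The overall strategy is standard for the ``dissipative weak solution'' framework: use the uniform bounds from Lemma~\ref{Lm_est} to extract weak limits, pass to the limit in the consistency relations of Theorem~\ref{Thm_con}, and collect the defects generated by the non-linearities in the measures $\RR$ and $\EE$. Part (ii) is then an immediate consequence of the weak--strong uniqueness Theorem~\ref{T_wsu}.

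\emph{Step 1: Compactness.} From Lemma~\ref{Lm_est} I extract a subsequence so that \eqref{limit} holds together with $\vrh \vuh \to \vr\vu$ in a suitable weak topology and $\cch \to \cc$ weakly-$(*)$ in $L^\infty(0,T;W^{1,2})$, $\Laph\cch \to \widetilde{\Lap \cc}$ weakly in $L^2L^2$, $\Fc_h \to \overline{F'(\cc)}$ weakly-$(*)$ in $L^\infty L^2$. The bound on $D_t\cch$ in $L^2 L^{3/2}$ combined with $\cch$ bounded in $L^\infty L^p$ gives, via the discrete Aubin--Lions argument, strong convergence $\cch \to \cc$ in $L^2(0,T;L^2(\Omega))$. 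Since $F$ is globally Lipschitz-differentiable with quadratic growth, this identifies $\overline{F'(\cc)} = F'(\cc)$. Lemma~\ref{cL1} identifies $\widetilde{\Lap\cc}=\Lap\cc$, and Lemma~\ref{cL2} upgrades $\nabla_h \cch \to \Grad \cc$ to strong convergence in $L^2L^2$. For the density I use the standard argument of \cite{FeLMMiSh}: the bound $\norm{\vrh}_{L^\infty L^\gamma}\aleq 1$ together with the continuity equation yields strong compactness of $\vrh$ in some negative Sobolev norm, hence $\vrh\vuh \to \vr\vu$ in $\mathcal{D}'$.

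\emph{Step 2: Passing to the limit in \eqref{cP1}--\eqref{cP3}.} The error terms vanish by Theorem~\ref{Thm_con}. In the continuity equation \eqref{cP1} and in the Allen--Cahn equation \eqref{cP3} no defect appears, thanks to the strong convergence of $\cch$ and $\Gradh \cch$ and the identification of $\Lap\cc$ and $F'(\cc)$. In the momentum equation \eqref{cP2} the only non-linear terms with no a priori strong compactness are $\vrh\auh\otimes\vuh$ and $p(\vrh)$; I define
\[
\D\RR(t) := \text{weak-}{*}\text{-}\lim\bigl(\vrh\auh\otimes\vuh - \vr\vu\otimes\vu\bigr) + \text{weak-}{*}\text{-}\lim\bigl(p(\vrh)-\overline{p(\vr)}\bigr)\mathbb{I},
\]
which belongs to $L^\infty(0,T;\mathcal{M}(\Omega;\R^{d\times d}_{\rm sym}))$ by the uniform bounds of Lemma~\ref{Lm_est}. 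The capillary stress $\Grad\cch \otimes \Grad\cch - \tfrac12 |\Grad\cch|^2\mathbb{I}$ produces no defect because $\Gradh \cch \to \Grad\cc$ strongly in $L^2L^2$.

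\emph{Step 3: Energy inequality and defect compatibility.} Summing the discrete energy balance \eqref{sta} over $k$, passing to the limit using weak lower semi-continuity of convex functionals, and choosing $\EE(\tau)$ as the non-negative measure encoding the defect between the limit of the discrete total energy and its continuous counterpart, I obtain \eqref{D5}. For the crucial compatibility \eqref{D6}, I observe that both summands in $\RR$ are controlled by the defect of the kinetic and internal energies: the convective defect is dominated by the defect of $\tfrac12\vr|\vu|^2$, and the pressure defect by the defect of $P(\vr)$, thanks to the structural assumption \eqref{H3bis} (convexity of $P-\underline a p$ and $\Ov a p-P$). Collecting these bounds gives $|\RR|\aleq \EE$, so $\{\vr,\vu,\cc\}$ is a dissipative weak solution in the sense of Definition~\ref{DD1}.

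\emph{Step 4: Unconditional convergence.} Under the additional hypothesis of part (ii), Theorem~\ref{T_wsu} applied to this dissipative weak solution with the strong solution as test trio yields $\vr=R$, $\vu=\vU$, $\cc=\CC$ and $\RR=\EE=0$. Since the limit is unique, the extraction of a subsequence in Step~1 is unnecessary and the whole family of numerical solutions converges. The main technical obstacle is Step~3: properly controlling the convective and pressure oscillation/concentration defects by $\EE$ requires the sharp pressure hypothesis \eqref{H3bis} and a careful choice of $\EE$ so that the bounds of Lemma~\ref{Lm_est} yield \eqref{D6} with a deterministic constant $\Lambda$, independent of $h$.
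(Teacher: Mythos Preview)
Your proposal is correct and follows the same overall route as the paper: extract weak limits from Lemma~\ref{Lm_est}, upgrade $\cch\to\cc$ to strong $L^2$ convergence via the $D_t\cch$ bound, identify $\Laph\cch\to\Lap\cc$ and $\nabla_h\cch\to\Grad\cc$ (strongly) through Lemmas~\ref{cL1}--\ref{cL2}, pass to the limit in Theorem~\ref{Thm_con} and in the discrete energy balance, and conclude with Theorem~\ref{T_wsu}.

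The one genuine difference is your treatment of the convective nonlinearity. The paper does \emph{not} put a convective defect into $\RR$: invoking the discrete compactness result \cite[Lemma~7.1]{Karper} (see also \cite{KN2020}) it shows outright that $\vrh\auh\otimes\vuh\to\vr\vu\otimes\vu$ weakly in $L^1((0,T)\times\Omega)$, so that $\RR=(\overline{p(\vr)}-p(\vr))\mathbb{I}$ and $\EE=\overline{P(\vr)}-P(\vr)$, and the compatibility \eqref{D6} is then an immediate consequence of \eqref{H3bis}. Your variant --- keeping a possible convective contribution in $\RR$ and dominating it by the kinetic--energy defect via the trace/convexity observation --- is also valid and is in fact the more general template; the paper's shortcut is cleaner once Karper's lemma is quoted, and it is precisely this lemma that explains the hypothesis $\gamma>3/2$ for $d=3$ (the consistency Theorem~\ref{Thm_con} alone only needs $\gamma>6/5$). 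One small slip in your Step~2: the pressure contribution to $\RR$ should be $(\overline{p(\vr)}-p(\vr))\mathbb{I}$; what you wrote, the weak-$*$ limit of $p(\vrh)-\overline{p(\vr)}$, vanishes by definition.
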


\begin{proof}

From the energy estimates \eqref{sta} (see also Lemma \ref{Lm_est}) and the Closed Graph Lemma~\ref{cL1} we deduce that at least for suitable subsequences,
\begin{align*}
\vrh &\to \vr \ \mbox{weakly-(*) in}\ L^\infty(0,T; L^\gamma(\Omega)),\ \vr \geq 0 ,
\\
\vuh, \auh &\to \vu \ \mbox{weakly in}\ L^2((0,T) \times \Omega; \R^d),
\\
 \Gradh \vuh &\to \Grad  \vu \ \mbox{weakly in} \ L^2((0,T) \times \Omega;\R^{d \times d}), \quad  \mbox{where} \ \bu   \in L^2(0,T; W^{1,2}(\Omega; \R^d)), 
 \\
\vrh \vuh, \vrh \auh  &\to \vc{m} \ \  \mbox{weakly-(*) in}\ L^\infty(0,T; L^{\frac{2\gamma}{\gamma + 1}}(\Omega; \R^d)), 
\end{align*}
\begin{equation} \label{esonc}
\begin{split}
\cch &\to \cc \ \mbox{weakly-(*) in}\ L^\infty (0,T; L^2(\Omega)),
  \\
  \Gradh \cch & \to \Grad  \cc \ \mbox{weakly-(*) in}\ L^\infty(0,T; L^2(\Omega;\R^d)), 
 \\
 \Laph \cch & \to \Lap  \cc \ \mbox{weakly in} \ L^2((0,T)\times\Omega), \quad 
 \mbox{where} \ \cc   \in L^\infty(0,T; W^{1,2}(\Omega)) \cap L^2(0,T; W^{2,2}(\Omega)), 
 \\
 \Fc_h &  \to f(\cc)=F'(\cc) \ \mbox{weakly-(*) in}\ L^\infty (0,T; L^2(\Omega)).
\end{split}
\end{equation}

Moreover, by virtue of the same arguments as in \cite[Lemma 7.1]{Karper} (see also \cite[Section 8]{KN2020}), we have $\vm = \vr \bu$, and 
\[ 
\vrh \auh \otimes \vuh  \to \vr \vu \otimes \vu \ \mbox{weakly in}\ L^1 ((0,T) \times \Omega; \R^{d\times d}_{\rm sym} ).
\]

Similarly, combining the estimates on the discrete time derivative $D_t \cc_h$ \eqref{est5} with \eqref{esonc} we obtain 
\[
\cc_h \to \cc \ \mbox{in}\ L^2((0,T) \times \Omega). 
\]

Further, employing the compactness Lemma~\ref{cL2} we find 
\[
\nabla_h \cc_h \to \Grad \cc \ \mbox{in}\ L^2((0,T) \times \Omega; \R^d). 
\]

Finally, it follows from the energy estimates \eqref{sta} and hypothesis \eqref{H3bis} that 
\begin{align*}
P(\vr_h) &\to \Ov{P(\vr)} \ \mbox{weakly-(*) in}\ L^\infty(0,T; \mathcal{M}^+(\Omega)),\\ 
p(\vr_h) &\to \Ov{p(\vr)} \ \mbox{weakly-(*) in}\ L^\infty(0,T; \mathcal{M}^+(\Omega)),
\end{align*}
where 
\[
0 \leq (\Ov{p(\vr)} - p(\vr) ) \mathbb{I} \equiv \mathfrak{R} \aleq  \mathfrak{E} \equiv
\Ov{P(\vr)} - P(\vr), 
\]
see \cite[Section 3.4]{AbbFeiNov} for details. 

Passing to the limit for $h \to 0$ in the consistency formulation \eqref{cP1}--\eqref{cP3} 
and the energy inequality \eqref{sta} we deduce that $(\vr, \vu, \cc)$ is a dissipative weak solution in the sense of Definition \ref{DD1}  and thus, by virtue of the weak-strong uniqueness we can conclude that the limit coincides with the strong solution, provided it exists. 
\end{proof}

\centerline{\textbf{ Acknowledgements}}

This work was supported by the mobility project 8J20FR007 Barrande 2020 of collaboration between France and Czech Republic. The grantor in the Czech Republic is the Ministry of Education, Youth and Sports.  

\appendix
\section{Appendix}
\subsection{Useful equality for the diffusive upwind flux}\label{Sec_Ap_flux}
Here we prove Lemma~\ref{lm_flux}. 
\begin{proof}First, recalling the discrete operators defined in Section~\ref{Sec_nt} we obtain by direct calculation that
\begin{align*}
&\intfacesB{ \bUp(\vrh \auh, \vuh) \cdot \jump{\auh}  -   \Up(\vrh, \vuh) \jump{\frac12|\auh|^2} }
\\& 
= \intfaces{ \vrh^{\up} \Big(\auh^\up \cdot \jump{\auh} - \frac12\jump{|\auh|^2}\Big)  \us \cdot \vn}
\\&
= \intfaces{ \vrh^{\rm in} \jump{\auh}\cdot \left(\auh^{\rm in} - \avs{\auh}\right)  [\vuh \cdot \vn]^+}
\\& \quad + \intfaces{ \vrh^{\rm out} \jump{\auh}\cdot\left(\auh^{\rm out} - \avs{\auh}\right)  [\us \cdot \vn]^-}
\\&
= -\frac12 \intfaces{ \vrh^{\rm in} \abs{ \jump{\auh}}^2  [\us \cdot \vn]^+}
 +\frac12 \intfaces{ \vrh^{\rm out}  \abs{ \jump{\auh}}^2  [\us \cdot \vn]^-}
\\&
= -\frac12 \intfaces{ \vrh^{\up}   \abs{\jump{\auh}}^2  \abs{\us \cdot \vn}}. 
\end{align*}
Next, it is easy to get
\begin{align*}
 - h^\eps \intfaces{\jump{\vrh \auh} \cdot \jump{\auh}} 
 + h^\eps \intfaces{\jump{\vrh } \cdot \jump{\frac12 |\auh|^2}}= - {h^\eps}\intfaces{  \avs{\vrh}  \abs{ \jump{\auh}}^2 }.
\end{align*}
Summing above the above two identities we finish the proof of Lemma~\ref{lm_flux}.
\end{proof}

\subsection{Existence of a numerical solution}\label{Sec_Ap_exi}
Here we prove Lemma~\ref{lem_exist}, that is the existence of a solution and positivity of density for the numerical method \eqref{scheme}. 
We shall show the proof via a topological degree theory, which was reported in Gallou\"{e}t et al.~\cite{GallouetMAC}. 

\begin{Theorem}(\cite[Theorem A.1]{GallouetMAC} Topological degree theory.\label{thm_topo})

Let $M$ and $N$ be two positive integers. Let $  C_1>\eps>0$  and $C_2>0$ be real numbers. Let
\begin{align*}
&V = \{ (r,U) \in R ^M \times R ^N; \ r_i >0 \; \forall \; i=1,\dots, M  \}, \\
&W = \{ (r,U) \in R ^M \times R ^N; | U | \leq C_2  \text{ and }   \eps < r_i < C_1 \; \forall \; i=1, \dots, M    \}.
\end{align*}
Let  $\calF$ be a continuous function mapping $V \times [0,1]$ to $R ^M \times R ^N$ and satisfying:
\begin{enumerate}
\item $ f \in W $ if $ f \in V $  satisfies $ F(f,\zeta)=\mathbf{0}$ for all $ \zeta \in [0,1] $;
\item The equation $\calF(f, 0)=\mathbf{0}$ is a linear system with respect to $f$ and admits a solution in $W$.
\end{enumerate}
Then there exists $ f \in W$ such that $\calF(f,1) =\mathbf{0}$.
\end{Theorem}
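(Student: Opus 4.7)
The statement is a standard Leray--Schauder--type homotopy/continuation result, so the plan is to use the Brouwer topological degree on the open bounded set $W \subset \R^{M+N}$. First I would verify that the setup is geometrically admissible: since $\varepsilon > 0$, the closure $\overline{W}$ consists of points with $r_i \in [\varepsilon, C_1]$, so $\overline{W} \subset V$, and the map $\mathcal{F}(\cdot,\zeta): \overline{W} \to \R^{M+N}$ is well defined and continuous for every $\zeta \in [0,1]$. Hypothesis~1 of the theorem says that any zero of $\mathcal{F}(\cdot,\zeta)$ lying in $V$ must in fact lie in the (open) set $W$; in particular $\mathcal{F}(f,\zeta) \neq 0$ for every $f \in \partial W$ and every $\zeta \in [0,1]$, so the homotopy never touches the ``target'' value $0$ on the boundary of $W$.

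Given this non-vanishing on $\partial W$, homotopy invariance of the Brouwer degree yields
\begin{equation*}
\deg\bigl(\mathcal{F}(\cdot,1),\, W,\, 0\bigr) \;=\; \deg\bigl(\mathcal{F}(\cdot,0),\, W,\, 0\bigr).
\end{equation*}
Hence the existence claim at $\zeta = 1$ reduces to showing that the right-hand side is non-zero. The next step is to analyse $\mathcal{F}(\cdot,0)$: by hypothesis~2 it is a linear map $f \mapsto A f - b$ admitting at least one solution in $W$. I would argue that $A$ must be invertible. Indeed, if $A$ were singular, the affine solution set $\{f : Af = b\}$ would be either empty or an affine subspace of positive dimension, hence unbounded; but by hypothesis~1 (applied at $\zeta = 0$) every solution in $V$ lies in the bounded set $W$, which forces the solution set to be a single point. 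Consequently $A$ is non-singular.

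With $A$ invertible, the linear map $\mathcal{F}(\cdot,0)$ has a unique zero $f_0 \in W$, and by the standard computation of the degree of an affine map
\begin{equation*}
\deg\bigl(\mathcal{F}(\cdot,0),\, W,\, 0\bigr) \;=\; \mathrm{sgn}(\det A) \;\in\; \{-1,+1\}.
\end{equation*}
Combining this with the homotopy identity above gives $\deg(\mathcal{F}(\cdot,1),W,0) \neq 0$, and the solvability property of the degree then produces an $f \in W$ with $\mathcal{F}(f,1) = 0$, as required.

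The only delicate point I see is the invertibility argument for $A$ in the second paragraph: one must be careful that hypothesis~1, which is phrased as a uniform a~priori bound along the whole homotopy, actually rules out a continuum of solutions at $\zeta = 0$. Once this observation is made, the rest is a textbook application of homotopy invariance and the normalisation of the Brouwer degree, so I do not expect further obstacles.
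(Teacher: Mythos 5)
First, note that the paper does not prove this statement at all: it is quoted verbatim from Gallou\"et--Maltese--Novotn\'y \cite[Theorem A.1]{GallouetMAC} and used as a black box in the proof of Lemma~\ref{lem_exist}, so there is no in-paper proof to compare against. Your strategy (Brouwer degree on a bounded set, homotopy invariance in $\zeta$, normalisation via the affine map at $\zeta=0$) is indeed the standard way such continuation results are proved, but as written there are two genuine gaps. (a) $W$ is \emph{not} open: the condition $|U|\leq C_2$ is closed, so $\deg(\calF(\cdot,\zeta),W,0)$ is not even defined, and -- more seriously -- Hypothesis~1 does \emph{not} imply $\calF(f,\zeta)\neq 0$ on $\partial W$: a zero with $|U|=C_2$ and $\eps<r_i<C_1$ is perfectly admissible under Hypothesis~1, belongs to $W$, and lies on $\partial W$. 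The repair is to run the degree argument on a slightly enlarged open bounded set $O$ with $W\subset O$, $\overline{O}\subset V$ and $\partial O\cap W=\emptyset$ (e.g.\ $|U|<C_2+1$, $\eps/2<r_i<2C_1$); then Hypothesis~1 forbids zeros on $\partial O$ for every $\zeta$, homotopy invariance applies, and a final application of Hypothesis~1 puts the zero of $\calF(\cdot,1)$ found in $O$ back into $W$, which is what the theorem asserts.

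(b) The invertibility of $A$ is not established by your argument. You claim that a singular $A$ would make the solution set of $Af=b$ an unbounded affine subspace, contradicting ``all solutions lie in the bounded set $W$''; but Hypothesis~1 only constrains solutions lying in $V$, and an unbounded affine set could a priori leave $V$ (some $r_i\to 0$) without ever violating the hypothesis. You flag this yourself as ``the only delicate point'' but do not close it. It can be closed, and this is exactly where $\eps>0$ (equivalently $\overline{W}\subset V$) is needed: if $f_0\in W$ solves the linear system and $v\in\ker A\setminus\{0\}$, consider $T=\sup\{t\geq 0:\ f_0+sv\in V\ \text{for all}\ s\in[0,t]\}$. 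For $s<T$ the point $f_0+sv$ is a zero in $V$, hence lies in $W$ by Hypothesis~1, so its $r$-components exceed $\eps$; if $T<\infty$, continuity gives $r_i\geq\eps>0$ at $s=T$, so $f_0+Tv\in V$ and, $V$ being open, the segment extends past $T$, a contradiction. Hence $T=\infty$, the whole ray lies in the bounded set $W$, which is absurd; therefore $\ker A=\{0\}$, $\deg(\calF(\cdot,0),O,0)=\operatorname{sgn}\det A\neq 0$, and the rest of your argument goes through. With (a) and (b) repaired the proof is correct and is, in substance, the argument used in the cited reference.
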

Now we are ready to prove Lemma~\ref{lem_exist}. 
\begin{proof}[\bf Proof of Lemma~\ref{lem_exist}.] The idea of the proof is to construct a mapping $\calF$ that satisfies Theorem~\ref{thm_topo}. 
We begin with the definition of the spaces $V$ and $W$  
\begin{align*}
&V =\left\{ ( \vrh^k, \Uh^k) \in \Qh \times \vQh,\ \vrh^k>0  \right\},
\\
&W =\left\{ ( \vrh^k, \Uh^k  ) \in \Qh \times \vQh ,\ \norm{\Uh^k} \leq C_2,  \epsilon < \vrh^k <C_1 \right\},
\end{align*}
where 
$\Uh:=(\vuh, \cch) \in \bVh\times \Xh =: \vQh$, $\vrh>c$ means $\vr_K>c$ for all $K\in \grid$, and the norm $\norm{ U_h}$ is given by  
$\norm{ U_h} \equiv \norm{\vuh}_{L^6} + \norm{ \cch }_{L^6}$.  Obviously, the dimension of the spaces $\Qh$ and $\vQh$ is finite. 

Next, for $\zeta \in [0,1]$ and $U^\star=(\vu^\star, \cc^\star, \mu^\star)$ we define the following mapping 
  \begin{align*}
   \calF : \ V  \times [0,1]\rightarrow  \Qh  \times \vQh,   \quad
        (\vrh^k,\Uh^k, \zeta)\longmapsto ( \vr^\star, U^\star) =\calF(\vrh^k,\Uh^k ,\zeta),
  \end{align*}
where $( \vr^\star, U^\star)$ is defined by:
\begin{subequations}\label{eqA1}
\begin{equation} \label{eqA11}
\intO{ \vr^\star \phi_h } = \intO{ \frac{\vrh^k  -\vrh^{k-1} }{\TS} \phi_h }  - \zeta \intfaces{\Fup ( \vrh^{k}, \vuh^{k} ) \jump{\phi_h} }  ,
\end{equation}
\begin{multline} \label{eqA12}
\intO{  \vu^{\star} \cdot \bfphi_h } =
\intO{ \frac{\vrh^k  \auhk- \vrh^{k-1}  \widehat{\vuh^{k-1} } }{\TS}  \cdot \bfphi_h }
+ \nu \intO{ \Gradh \vuh^k: \Gradh \bfphi_h }
+ \zeta \eta \intO{\Divh \vuh^k \Divh \bfphi_h}
\\
- \zeta \intfaces{ \bFup(\vrh^k \auhk,\vuh^k)
\cdot \jump{\widehat{ \bfphi_h} }   } 
- \zeta \intO{ p(\vrh^{k}) \Divh \bfphi_h } 
- \zeta \intO{ ( \zeta \Fc_h^k - \Laph \cch^k) \Gradh \cch^k \cdot  \bfphi_h},
\end{multline}
\begin{equation}\label{eqA13}
\intO{\cc^\star \psi_h}=  \intO{\frac{ \cch^k-\cch^{k-1}}{\TS}\psi_h}  + \zeta\intO{ \vuh^k \cdot \Gradh \cch^k \psi_h } - \intO{ \big(\Laph \cch^k - \zeta  \Fc_h^k  \big)   \psi_h   } ,
\end{equation}
\end{subequations}
for any $\phi_h \in \Qh$ and $\bfphi_h \times \psi_h  \in \vQh$, where $\bfphi_h=(\phi_{1,h}, \dots, \phi_{d,h})$, 
and the discrete Laplace is defined by the following equality
\[ - \intO{\Laph \cch \psi_h} = (1-\zeta) \intO{\cch \psi_h } + B(\cch,\psi_h) .
\] 
%

It is obvious that $\calF$ is well defined and continuous since the values of $\vr^\star$ and $U^\star=(\vu^\star, \cc^\star)$ can be determined by setting $\phi_h = 1_{K}$,  $K\in \grid$, in \eqref{eqA11}, $\phi_{i,h}=1_\sigma, \sigma \in \faces$ with $\phi_{j,h}=0$ for $j\neq i, \; i,j \in(1,\ldots, d)$ in \eqref{eqA12}, and $\psi_h =1_{P}$,  $P$ being a degree of freedom of the space $\Xh$ in \eqref{eqA13}, respectively. 

With the above definitions, we aim to show that both hypotheses of Theorem~\ref{thm_topo} hold. 

We first aim to prove that Hypothesis~1 of Theorem~\ref{thm_topo} holds. To this end, we suppose $(\vrh^k,\Uh^k ) \in \Qh \times \vQh$ is a solution to $\calF(\vrh^k,\Uh^k , \zeta)= \mathbf{0}$ for any $\zeta \in [0,1]$. Then the system \eqref{eqA1} becomes
\begin{subequations}\label{eqA2}
\begin{equation} \label{eqA21}
 \intO{ \frac{\vrh^k  -\vrh^{k-1} }{\TS} \phi_h }  - \zeta \intfaces{\Fup ( \vrh^{k}, \vuh^{k} ) \jump{\phi_h} } =0,
\end{equation}
\begin{multline} \label{eqA22}
\intO{ \frac{\vrh^k  \auhk- \vrh^{k-1}  \widehat{\vuh^{k-1} } }{\TS}  \cdot \bfphi_h }
+ \nu \intO{  \Gradh \vuh^k: \Gradh \bfphi_h }
+ \zeta \eta \intO{\Divh \vuh^k \Divh \bfphi_h}
\\
- \zeta \intfaces{ \bFup(\vrh^k \auhk,\vuh^k)
\cdot \jump{\widehat{ \bfphi_h} }   } 
- \zeta \intO{ p(\vrh^{k}) \Divh \bfphi_h } 
- \zeta \intO{ (  \zeta  \Fc_h^k - \Laph \cch^k) \Gradh \cch^k \cdot  \bfphi_h}=0,
\end{multline}
\begin{equation}\label{eqA23}
   \intO{\frac{ \cch^k-\cch^{k-1}}{\TS}\psi_h}  + \zeta\intO{ \vuh^k \cdot \Gradh \cch^k \psi_h } - \intO{  \big(\Laph \cch^k -  \zeta  \Fc_h^k  \big)  \psi_h   } =0,
\end{equation}

\end{subequations}
Taking $\phi_h=1$ as a test function in \eqref{eqA21} we immediately obtain
\begin{equation}\label{eqA3}
\norm{\vrh^k}_{L^1}= \intO{\vrh^k} =  \intO{\vrh^{k-1}} \equiv M_0 >0.
\end{equation}
Further, following the proof of the energy stability~\eqref{sta} we know that  
\begin{align*} 
& 
D_t \intOB{ \frac{1}{2} \vrh \abs{\auhk}^2   + \frac{1}{2} \normB{\cch^k}^2+   \zeta  F(\cch^k) + \frac12 (1-\zeta)    |\cch^k|^2+ P(\vrh^k)}+  \nu   \norm{\Gradh \vuh^k}_{L^2}^2 
\\&
 +  \zeta \eta \norm{  \Divh \vuh^k}_{L^2}^2
+ \norm{ D_t \cch^k + \zeta \vuh^k \cdot \Gradh \cch^k } _{L^2}^2   
\leq 0
\end{align*}
Then we may apply the discrete Sobolev's inequality stated in Lemma~\ref{lm_SP} to derive  
\begin{equation}\label{eqA4}
\norm{\Uh^k } \equiv \norm{ \vuh^k}_{L^6} + \norm{ \cch^k }_{L^6} \leq C_2,
\end{equation}
where $C_2>0$ depends on the initial data of the problem.

Next, let $K\in \grid$ be such that $\vr_K^k = \min_{L \in \grid} \vrh^k|_L$. Now setting $\phi_h = 1_K$ and noticing $\jump{\vrh^k}_{\sigma \in \facesK} \geq 0 $ we find 
\begin{align*}
&\frac{ \abs{K} }{\TS \zeta} (\vr_K^k  - \vr^{k-1}_{K})  =- \sum_{\sigma \in \facesK} \intSh{ \Fup (\vrh^k, \vuh^k) }
=  -\sum_{\sigma \in \facesK}  \abs{\sigma} \left( \vrh^{k,\up} \us^k \cdot \vn  - h^\eps \jump{\vrh^k} \right)
\\&
\geq  - \sum_{\sigma \in \facesK} \abs{\sigma}  \vr_K^k \us^k \cdot \vn
+ \sum_{\sigma \in \facesK} \abs{\sigma}  (\vr_K^k - \vrh^{k,\up} ) \us^k \cdot \vn
\\&
= - \abs{K} \vr_K^k  (\Divh \vuh^k)_K
- \sum_{\sigma \in \facesK} \abs{\sigma}  \jump{\vrh^k} [\us^k \cdot \vn]^-
 \geq - \abs{K} \vr_K^k  (\Divh \vuh^k)_K
\\&
\geq - \abs{K} \vr_K^k  \abs{\Divh \vuh^k}_K .
\end{align*}
Thus
$
\vrh^k \geq  \vr_K^k \geq  \frac{\vr^{k-1}_{K} }{1 + \TS \zeta  \abs{(\Divh \vuh^k)_K} }  >0.
$
Consequently, by virtue of \eqref{eqA4}
$ \vrh^k > \epsilon $,
where $\epsilon$ depends only on the data of the problem.
Further, we get from \eqref{eqA3} that
$ \vrh^k \leq \frac{  M_0 }{\min_{K\in \grid} |K|}$,
which indicates the  existence of $C_1>0$ such that
$\vrh^k <C_1$.
Therefore,  Hypothesis 1 of Theorem \ref{thm_topo} is satisfied.

We also need to show that  Hypothesis 2 of Theorem \ref{thm_topo} is satisfied.
Let $\zeta=0$ then the system $\calF(\vrh^k,\Uh^k,0 )=\mathbf{0}$ reads
\begin{subequations}\label{eqA5}
\begin{equation} \label{eqA51}
\vrh^k = \vrh^{k-1},
\end{equation}
\begin{equation}\label{eqA52}
\intO{ \frac{\vrh^k \auhk  - \vrh^{k-1} \widehat{ \vuh^{k-1} } }{\TS}  \cdot \bfphi_h }
+  \nu \intO{ \Gradh \vuh^k :   \Gradh \bfphi_h  }
=0,
\end{equation}
\begin{equation}\label{eqA53}
\begin{split}
 & \intO{ \frac{\cch^k -\cch^{k-1} }{\TS}  \psi_h } = \intO{ \Laph \cch^k    \psi_h  } 
 =  - \intOB{ \cch^k \psi_h  + \Gradh \cch^k  \cdot \Gradh \psi_h   } 
  \\& \qquad   
  - \intfacesB{ \jump{\psi_h} \vn \cdot \avs{\Gradh \cch^k}  +  \jump{\cch^k} \vn \cdot \avs{\Gradh \psi_h}  + \frac{1}{h^{1+\beta}} \jump{\cch^k} \jump{\psi_h}  }.
\end{split}
\end{equation}
\end{subequations}

From \eqref{eqA51} it is obvious $\vrh^k = \vrh^{k-1}>0$.
Substituting \eqref{eqA51} into \eqref{eqA52} we arrive at a linear system on $\vuh^k$ with a symmetric positive definite matrix. Thus \eqref{eqA52} admits a unique solution. 
Further, noticing \eqref{eqA53} is a linear system of $\cch^k$ with a positive definite matrix, we know that it admits a unique solution $\cch^k$. 

Consequently, Hypothesis 2 of Theorem~\ref{thm_topo} is satisfied.

We have shown that both hypotheses of Theorem~\ref{thm_topo} hold. Applying Theorem~\ref{thm_topo} finishes the proof of Lemma~\ref{lem_exist}. 
\end{proof}

\bibliographystyle{plain}

\begin{thebibliography}{10}

\bibitem{AbbFeiNov}
A.~Abbatiello, E.~Feireisl and A.~Novotn{\' y}.
\newblock Generalized solutions to models of compressible viscous fluids.
\newblock {\em Discrete \& Continuous Dynamical Systems -A}, 
41(1): 1--28, 2021.


\bibitem{Arnold}
D.N.~Arnold.
\newblock An interior penalty finite element method with discontinuous elements. 
\newblock {\em SIAM J.~Numer.~Anal.}, 
19(4):742--760, 1982.

\bibitem{AnFaWh}
D.~M. Anderson, G.~B. McFadden and A.~A. Wheeler.
\newblock Diffuse-interface methods in fluid mechanics.
\newblock In {\em Annual review of fluid mechanics, {V}ol. 30}, volume~30 of
  {\em Annu. Rev. Fluid Mech.}, pages 139--165. Annual Reviews, Palo Alto, CA,
  1998.

\bibitem{Blesgen}
T.~Blesgen.
\newblock A generalization of the {N}avier-{S}tokes equations to two-phase flow.
\newblock {\em J. Phys. D Appl. Phys.}, {\bf 32}:1119--1123, 1999.

\bibitem{ciarlet}
P.~G.~Ciarlet.
\newblock The Finite Element Method for Elliptic Problems.
\newblock{\em Classics in Applied Mathematics, Society for Industrial and Applied Mathematics}, 2002.

\bibitem{Daf4}
C.M. Dafermos.
\newblock The second law of thermodynamics and stability.
\newblock {\em Arch. Rational Mech. Anal.}, {\bf 70}:167--179, 1979.

\bibitem{DPE}
D.~A.~Di~Pietro and A.~Ern.
\newblock Mathematical Aspects of Discontinuous Galerkin Methods.
\newblock{\em Math{\'e}matiques \& Applications}, Springer Heidelberg, xviii+384pp, 2012.

\bibitem{FeJiNo}
E.~Feireisl, B.~Jin and A.~Novotn{\' y}.
\newblock Relative entropies, suitable weak solutions, and weak-strong
  uniqueness for the compressible {N}avier-{S}tokes system.
\newblock {\em J. Math. Fluid Mech.}, {\bf 14}:712--730, 2012.

\bibitem{FeKaNo}
E.~Feireisl, T.~Karper and A.~Novotn{\'y}.
\newblock A convergent numerical method for the {N}avier--{S}tokes--{F}ourier
  system.
\newblock {\em IMA J. Numer. Anal.}, {\bf 36}(4):1477--1535, 2016.

\bibitem{FeLMMiSh}
E.~Feireisl, M.~Luk{\'a}{\v c}ov{\'a}-Medvi{\v d}ov{\'a}, {H}. Mizerov{\'a}  and B.~She.
\newblock {\em Numerical analysis of compressible fluid flows}.
\newblock Springer-Verlag.
\newblock To appear.

\bibitem{FeiLuk}
E.~Feireisl and M.~Luk\'{a}\v{c}ov\'{a}-Medvid'ov\'{a}.
\newblock Convergence of a mixed finite element--discontinuous Galerkin scheme for the
  isentropic {N}avier-{S}tokes system via dissipative measure-valued solutions.
\newblock {\em Found. Comput. Math.}, {\bf 18}(3):703--730, 2018.

\bibitem{feinov}
E.~Feireisl and A.~Novotn\'y.
\newblock {\em Singular limits in thermodynamics of viscous fluids}.
\newblock Birkhauser, Basel, 2009.

\bibitem{FePePr}
E.~Feireisl, M.~Petcu and D.~Pra\v{z}\'{a}k.
\newblock Relative energy approach to a diffuse interface model of a
  compressible two-phase flow.
\newblock {\em Math. Methods Appl. Sci.}, {\bf 42}(5):1465--1479, 2019.

\bibitem{GallouetMAC}
T.~Gallou\"et, D.~Maltese and A.~Novotn\'y.
\newblock{Error estimates for the implicit MAC scheme for the compressible Navier-Stokes equations.}
\newblock{\em Numer. Math.}, {\bf 141}(2):495--567, 2019.

\bibitem{Germ}
P.~Germain.
\newblock Weak-strong uniqueness for the isentropic compressible
  {N}avier-{S}tokes system.
\newblock {\em J. Math. Fluid Mech.}, {\bf 13}(1):137--146, 2011.

\bibitem{HS_MAC}
R.~Ho\v{s}ek and B.~She.
\newblock Stability and consistency of a finite difference scheme for compressible viscous isentropic flow in multi-dimension.
\newblock{\em J. Numer. Math.} {\bf 26}(3):111--140, 2018.


\bibitem{Karper}
T.~Karper.
\newblock A convergent FEM-DG method for the compressible Navier--Stokes equations.
\newblock {\em Numer. Math.} {\bf 125}(3):441--510, 2013.


\bibitem{Kay}
D.~Kay, V.~Styles, and E.~S\"uli.
\newblock Discontinuous Galerkin finite element approximation of the Cahn-Hilliard equation with convection.  
\newblock {\em SIAM J. Numer. Anal.} {\bf 47}(4):2660–2685, 2009. 



\bibitem{KN2020} Y.-S. Kwon, A. Novotny.
\newblock Consistency, convergence and error estimates for a mixed finite element-finite volume scheme
to compressible Navier-Stokes equations with general inflow/outflow boundary data.
\newblock { IMA J. Numer. Anal.} To appear, 2020.


\bibitem{Ma}
N.~Masmoudi.
\newblock{Incompressible inviscid limit of the compressible Navier-Stokes system.}
\newblock{\em Ann. Inst. H. Poincar\'e, Anal. Nonlin\'eaire} {\bf 18}: 199--224, 2001.

\bibitem{SaR}
L.~ Saint-Raymond.
\newblock{Hydrodynamic limits: some improvements of the relative entropy method.}
\newblock{\em Ann. Inst. H. Poincar\'e, Anal. Nonlin\'eaire} {\bf 26}:705--744, 2009.

\bibitem{Sprung}
B.~Sprung.
\newblock Upper and lower bounds for the {B}regman divergence.
\newblock {\em J. Inequal. Appl.}, No. 4, 12, 2019.

\bibitem{Wi}
S.A. Williams.
\newblock Analyticity of the boundary for {L}ipschitz domains without {P}ompeiu
  property.
\newblock {\em Indiana Univ. Math. J.}, {\bf 30}(3):357--369, 1981.

\end{thebibliography}

\def\cprime{$'$} \def\ocirc#1{\ifmmode\setbox0=\hbox{$#1$}\dimen0=\ht0
  \advance\dimen0 by1pt\rlap{\hbox to\wd0{\hss\raise\dimen0
  \hbox{\hskip.2em$\scriptscriptstyle\circ$}\hss}}#1\else {\accent"17 #1}\fi}

\end{document}